\newcommand{\ep}[1]{\mathbb{E}\left({#1}\right)}
\newcommand{\distmatrix}{D}
\newcommand{\x}{\vec{x}}
\newcommand{\y}{\vec{y}}
\newcommand{\one}{\vec{1}}
\newcommand{\zero}{\vec{0}}
\newcommand{\Path}{\mathsf{P}}
\newcommand{\Basket}{\mathsf{B}}
\newcommand{\Cube}{\mathsf{Q}}
\newcommand{\Cycle}{\mathsf{C}}
\newcommand{\Complete}{\mathsf{K}}
\newtheorem{theorem}{Theorem}[section]
\newtheorem{remark}[theorem]{Remark}
\newtheorem{definition}[theorem]{Definition}
\newtheorem{lemma}[theorem]{Lemma}
\newtheorem{proposition}[theorem]{Proposition}
\newtheorem{example}[theorem]{Example}
\newtheorem{observation}[theorem]{Observation}
\newcommand{\Circ}{\circ}
\title{Distance Exceptional Graphs\\and the Curvature Index}
\author{Sawyer Jack Robertson\thanks{Department of Mathematics, UC San Diego\\\hspace*{0.5cm}\texttt{MSC2020:} 05C50, 05C12, 05C76 \\\hspace*{0.5cm}\texttt{Keywords:} graph distance matrix, discrete curvature, graph Cartesian product, graph join}\and Finn Southerland\footnotemark[1]\and Erlang Surya\footnotemark[1]}
\date{}
\begin{document}
	\captionsetup[figure]{labelfont={bf},labelformat={default},labelsep=period,name={Figure}}
    % \linenumbers
	\maketitle

    \begin{abstract}
        A graph $G=(V,E)$ on $n$ vertices is said to be \emph{distance exceptional} if the equation $D\vec{x} = \vec{1}$ admits no solution $\vec{x}\in\mathbb{R}^{n}$, where $D\in\mathbb{R}^{n\times n}$ is the shortest path distance matrix of $G$. These graphs were first studied by Steinerberger in the context of a notion of discrete curvature (``Curvature on graphs via equilibrium measures,'' \emph{Journal of Graph Theory}, 103(3), 2023). This work has led to several open questions about distance exceptional graphs, including: What is the structure of such graphs? How can they be characterized? How rare are they? In this paper, we investigate these questions through the lens of a graph invariant we term the \emph{curvature index}. We show that a graph is distance exceptional if and only if this invariant vanishes, and we develop a calculus for this invariant under graph operations including the Cartesian product and graph join. As a result, we recover and generalize a number of known results in this area. We show that any graph $G$ can be realized as an induced subgraph of a distance exceptional graph $G'$. Moreover, in many cases, this embedding is an isometry. In turn, this leads to a number of methods for constructing distance exceptional graphs. 
    \end{abstract}

    %\tableofcontents

    \section{Introduction}\label{sec:introduction}

    Let $G=(V,E)$ be a graph, where $V=V(G)=\{v_1,\dotsc, v_n\}$ is a fixed vertex set and $E=E(G)\subseteq \binom{V}{2}$ is a collection of $m\geq 1$ undirected edges. We assume that $G$ is connected. For $x, y\in V$, we denote by $d(x)$ the vertex degree of $x$ and by $d(x, y)$ the shortest path distance between $x$ and $y$. The matrix $\distmatrix\in\mathbb{R}^{n\times n}$ with entries $\distmatrix=(d(v_i,v_j))_{i, j=1}^{n}$ is known as the distance matrix of $G$. We denote by $\one_k\in\mathbb{R}^k$ the $k$-vector of all ones, and we suppress the subscript $k$ by writing simply $\one$ when the ambient dimension of the vector can be understood from surrounding context.

    In 2023, Steinerberger introduced a notion of curvature on graphs by considering the equation $\distmatrix\x  = n\one$ in the unknown $\x $, whose solution (or an approximation thereof) was used to define the curvature at each vertex~\cite{steinerberger2023curvature}. The author showed that this notion of graph curvature satisfies several desirable properties, including a Bonnet--Myers-type diameter bound (see~\cite[Thm. 1]{steinerberger2023curvature}) and a Lichnerowicz-type theorem relating curvature to graph Laplacian eigenvalues (see~\cite[Thm. 2]{steinerberger2023curvature}). Subsequent work has explored closed-form computation of this curvature vector in various settings, such as block graphs~\cite{cushing2024note}. 
    
    However, while the existence of a solution to this system is not always guaranteed, the system $\distmatrix\x = \one$ seems to admit a solution intriguingly often. In settings where $\distmatrix\x  = \one$ does \emph{not} admit a solution, we say the graph $G$ is \emph{distance exceptional}. In~\cite{steinerberger2023curvature}, the author points out, for example, that in a \emph{Mathematica} database of 9,059 graphs satisfying $2\leq n\leq 500$, only five fail to admit a solution to the equation $\distmatrix\x  = \one$. This has led to several open questions about distance exceptional graphs, including: What is the structure of such graphs? How can they be characterized? How rare are they? 

    In later work~\cite{dudarov2023image}, the authors prove that, despite the apparent rarity, such graphs \emph{do} occur for every order $n\ge 7$. They also show that for Erd\H{o}s--R\'enyi random graphs, the distance matrix is invertible, and thus $\distmatrix\x =\one$ has a unique solution, asymptotically almost surely as $n\rightarrow\infty$. This reinforces the view that distance exceptional graphs are rare in practice. We illustrate six examples of distance exceptional graphs in~\cref{fig:small-dx-examples}.

    \begin{figure}[t!]
        \centering
        \input{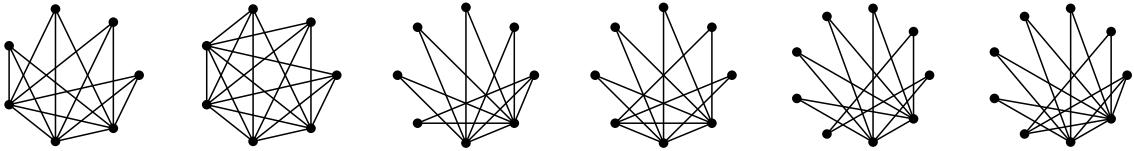}
        \caption{This figure contains illustrations of six distance exceptional graphs. From left to right: the two distance exceptional graphs on seven vertices, followed by two examples each on eight and nine vertices, respectively.}\label{fig:small-dx-examples}
    \end{figure}

    Structurally, the smallest nontrivial distance exceptional graphs appear on $n=7$ vertices (up to isomorphism, there are two), and the counts for $n=8,9,$ and $10$ are $14$, $398$, and $23,923$, respectively~\cite{OEISA354465}. Despite their rarity, this family of graphs demonstrates certain closure properties, in the following two senses. First, if $G, H$ are distance exceptional, then their \emph{graph Cartesian product} $G\,\Box\, H$, formed by connecting vertices $(u, v)$ and $(u', v')$ belonging to $V(G)\times V(H)$ if either $u=u'$ and $\{v, v'\}\in E(H)$ or $v=v'$ and $\{u, u'\}\in E(G)$, is also distance exceptional~\cite[Thm. 6]{dudarov2023image}. Second, if $G, H$ are distance exceptional and $u\in V(G)$, $v\in V(H)$ are fixed, then the \emph{vertex coalescence} $G\,\Circ_{u,v}\,H$ obtained by identifying $u$ with $v$ and thereby connecting $G$ to $H$, is again distance exceptional~\cite[Thm. 6]{chen2023steinerberger}. Moreover, distance exceptional graphs can be constructed from graphs $G$ and $H$ satisfying certain properties (omitted here) by means of the \emph{graph join} $G+ H$ obtained by connecting every vertex of $G$ to every vertex of $H$~\cite[Thm. 4]{dudarov2023image}. These closure properties suggest that the family of distance exceptional graphs, while possibly thin, admits some internal structure.

    These closure results have been used to obtain constructions of distance exceptional graphs of arbitrarily large sizes by taking known examples and then gluing, multiplying, or joining them together in a structured fashion. Despite this, it is not clear based on this body of work how exactly these pathological ``building blocks'' (if we may term them as such) appear in the first place. 

    Our approach to these questions was initially inspired by the following empirical observation, which will serve as a motivating example for our results that follow.
    
    \begin{observation}\label{obs:jailbroken-baskets}
        Each graph depicted below has thirteen vertices and fifteen edges, and is distance exceptional.
        {
            \begin{center}
                \input{figures/basket_dx.tex}
            \end{center}
        }
        Moreover, thousands of additional distance exceptional examples can be found in the same manner: starting from the common induced subgraph consisting of two intersecting cycles, iteratively adjoin exactly four new pendant vertices. Adding greater or fewer than four pendant vertices results in a graph that is not distance exceptional.
    \end{observation}

    The graphs shown in~\cref{obs:jailbroken-baskets} are perhaps surprising since conventional wisdom suggests that distance exceptional graphs should be exceedingly rare and highly constrained. Despite this, the above collection apparently demonstrates certain degrees of freedom. In fact, this construction can be generalized by defining the following family of so-called \emph{basket graphs}, which serve to capture the common induced subgraph shown in~\cref{obs:jailbroken-baskets}.

    \begin{definition}[Basket graphs]\label{def:basket}
        Let $k\geq 1$. Letting $\Path_k$ denote the path graph on $k$ vertices, start with disjoint copies of the graphs $\Path_{k}, \Path_{k+1}, \Path_{k+1},$ and $\Path_{k+1}$. Then join all four paths together at their endpoints, creating two vertices of degree four. The resulting graph is called the $k$-th \emph{basket graph}, denoted $\Basket_k$.
    \end{definition}

    Basket graphs are illustrated in~\cref{fig:basket}. The following theorem makes~\cref{obs:jailbroken-baskets} precise.

    \begin{theorem}\label{thm:basket-jailbreak-intro}
        Let $j\geq 1$ be fixed. Let
            \begin{align}\label{eq:basket-iota}
                s = \frac{ (-3)^{2j+1} + 4(2j+1)-1}{8} \in\mathbb{Z}_{< 0}.
            \end{align}
        Let $G_0$ be the basket graph $\Basket_{2j+1}$ as in~\cref{def:basket}. For $i\ge 1$, let $G_{i}$ be obtained by adding a pendant to the graph $G_{i-1}$ in any manner. Let $G = G_{2|s|}$ be \emph{any} graph obtained from this procedure after exactly $2|s|$ steps. Then $G$ is distance exceptional.
    \end{theorem}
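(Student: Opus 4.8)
The strategy is to track two integer invariants of the distance matrix as pendants are attached --- the determinant $\det D$ and the \emph{cofactor sum} $\operatorname{cof}(D):=\sum_{i,j}\operatorname{cof}_{ij}(D)=\vec{1}^{T}\operatorname{adj}(D)\vec{1}$ --- and to show that after exactly $2|s|$ steps one lands in the regime $\det D=0$, $\operatorname{cof}(D)\neq 0$, which forces distance exceptionality. Indeed, if $\operatorname{cof}(D_G)\neq 0$ then $\operatorname{adj}(D_G)\neq 0$, so $\operatorname{rank}(D_G)\ge n-1$, and together with $\det D_G=0$ this gives $\operatorname{rank}(D_G)=n-1$; writing $\ker D_G=\mathbb{R}\vec{v}$ and using $D_G\operatorname{adj}(D_G)=\operatorname{adj}(D_G)D_G=0$ with the symmetry of $D_G$, one gets $\operatorname{adj}(D_G)=\alpha\,\vec{v}\vec{v}^{T}$ with $\alpha\neq 0$, so $0\neq\operatorname{cof}(D_G)=\alpha(\vec{1}^{T}\vec{v})^{2}$ yields $\vec{1}^{T}\vec{v}\neq 0$, i.e. $\vec{1}\notin\vec{v}^{\perp}=\operatorname{col} D_G$, so $D_G\vec{x}=\vec{1}$ is unsolvable. (This should be exactly the vanishing locus of the paper's curvature index.) Conversely, $\det D_G\neq 0$ makes $D_G$ invertible, so $G$ is \emph{not} distance exceptional.

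Next I record the effect of one pendant. Attaching a pendant edge to $G$ at an arbitrary vertex produces $G'$ whose blocks are those of $G$ together with one extra copy of $K_2$; since $\det D_{K_2}=-1$ and $\operatorname{cof}(D_{K_2})=-2$, the Graham--Hoffman--Hosoya block identities give
\[
\operatorname{cof}(D_{G'})=-2\,\operatorname{cof}(D_G),\qquad \det D_{G'}=-2\,\det D_G-\operatorname{cof}(D_G),
\]
with \emph{no} dependence on where the pendant is attached --- this location-independence is precisely what licenses the phrase ``in any manner.'' Setting $c_0=\operatorname{cof}(D_{\Basket_{2j+1}})$, $d_0=\det D_{\Basket_{2j+1}}$ and iterating, a one-line induction gives, for the graph $G_i$ produced after $i$ pendant additions,
\[
\operatorname{cof}(D_{G_i})=(-2)^{i}c_0,\qquad \det D_{G_i}=(-2)^{i}\!\left(d_0+\tfrac{i}{2}c_0\right).
\]
So $\det D_{G_i}$ vanishes precisely at $i=-2d_0/c_0$, and $\operatorname{cof}(D_{G_i})\neq 0$ for all $i$ provided $c_0\neq 0$; hence $G_i$ is distance exceptional exactly when $i=-2d_0/c_0$, and is not otherwise.

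It remains to compute $c_0,d_0$, to check $c_0\neq 0$, and to verify the arithmetic identity $-2d_0/c_0=2|s|$, i.e. $d_0/c_0=s=\tfrac{(-3)^{2j+1}+4(2j+1)-1}{8}$; granting this, $G_{2|s|}$ has $\det D_{G_{2|s|}}=0$ and $\operatorname{cof}(D_{G_{2|s|}})=(-2)^{2|s|}c_0\neq 0$, and the criterion above finishes the theorem (and, as a bonus, that all other $G_i$ are not distance exceptional, matching~\cref{obs:jailbroken-baskets}). The substance is the evaluation of $\det D_{\Basket_k}$ and $\operatorname{cof}(D_{\Basket_k})$ at $k=2j+1$. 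Now $\Basket_k$ is $2$-connected --- it is the generalized theta graph whose two degree-four vertices $u,w$ are joined by four internally disjoint paths of lengths $k-1,k,k,k$ --- so Graham--Hoffman--Hosoya does not split it further, and one must compute directly: e.g. write $D_{\Basket_k}$ in block form indexed by the four arcs, using $d(u,w)=k-1$ and expressing the remaining distances as minima over the two available routes, then reduce by structured row/column operations; or argue by induction on $k$, relating $\Basket_{k+1}$ to $\Basket_k$ via simultaneous subdivision of the four arcs and tracking how $\det$ and $\operatorname{cof}$ behave under subdividing an edge of a $2$-connected block; or reuse whatever cycle/path evaluations the paper's calculus already supplies. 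The factor $(-3)^k$ should emerge from the linear recursion governing these path/cycle sub-distance-matrices.

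The main obstacle is exactly this last computation: theta-graph distance matrices are not circulant, their entries carry genuine case distinctions (which way around a cycle is shorter), and extracting the closed forms --- in particular confirming the unexpectedly clean value $d_0/c_0=s$ --- is where essentially all the work lies. A secondary point requiring care is parity: the hypothesis $k=2j+1$ is presumably what makes $s$ a negative integer and keeps $c_0\neq 0$, so I would separately check that the even case genuinely behaves differently, consistent with the theorem claiming only the odd case. Everything else --- the pendant recursion, its closed-form solution, and the rank/adjugate criterion --- is routine.
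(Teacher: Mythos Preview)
Your approach is correct and genuinely different from the paper's. The paper works at the level of witness vectors: it constructs an explicit $\vec{x}$ on $\Basket_{2j+1}$ with $\one^\top\vec{x}=1$ and $D\vec{x}=s\one$ (a lengthy but elementary appendix computation exploiting the symmetry of $\Basket_k$, building $\vec{x}$ from alternating powers of $-3$ along each arc), and then shows by a short block-matrix argument that each pendant addition lets one modify $\vec{x}$ into a new unit-sum vector $\vec{x}'$ satisfying $D'\vec{x}'=(r+\tfrac12)\one$; after $2|s|$ steps one reaches $r=0$, yielding a nonzero-sum kernel vector and hence distance exceptionality. Your route via Graham--Hoffman--Hosoya and the pair $(\det D,\operatorname{cof} D)$ is a repackaging of the same information---indeed $\iota(G)=\det D/\operatorname{cof} D$ whenever $D$ is invertible, so your target $d_0/c_0$ is precisely the paper's $s$---but it buys two things: the pendant step becomes a one-line citation rather than an ad hoc vector modification, and you obtain the converse (no other $G_i$ is distance exceptional) for free from $\det D_{G_i}\neq 0$, which the paper only notes empirically. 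The trade-off is that you need $c_0\neq 0$, equivalently that $D_{\Basket_{2j+1}}$ is invertible; this is slightly stronger than what the paper's witness-vector argument uses, and if it failed your recursion would collapse to $0=0$ for every $i$. The basket computation itself is unavoidable in either approach.
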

    
    \Cref{thm:basket-jailbreak-intro} is proved from first principles in~\cref{sec:basket-example} (see also~\cref{rmk:basket}). This procedure of starting with a graph $G$ which is \emph{not} distance exceptional and modifying it through the addition of vertices and edges in such a way as to obtain a distance exceptional graph can be generalized to include other graph operations and modifications. As these procedures are carried out, we observe that the proximity of a given graph to being distance exceptional can be captured in some sense by a graph invariant which we term the \emph{curvature index}. This invariant can be defined in several ways, including by using the Moore--Penrose inverse of $\distmatrix$, or the spectral decomposition of $\distmatrix$ (see~\cref{prop:representations-of-index}). We present here the version which is used most often in this article and which is geometric in nature. If $G$ is a graph and $\x\in\mathbb{R}^{n}$ is such that $\distmatrix\x = \one$, we say $\x$ is a \emph{curvature potential} of $G$.

    \begin{definition}[Curvature index]\label{defn:curvature-index}
        Let $G=(V,E)$ be a connected graph. Assume either that there exists a curvature potential $\x\in\mathbb{R}^{n}$ of $G$ such that $\one^\top\x \ne 0$ or that $G$ is distance exceptional. Then the \emph{curvature index} of $G$ is the unique real number $\iota(G)$ such that the affine space
            \begin{align*}
                X(G) &:= \left\{\distmatrix\x \;:\; \x\in\mathbb{R}^{n},\, \x^\top\one = 1\right\}
            \end{align*}
        satisfies $X(G)\cap\mathbb{R}\one = \iota(G)\one$. If all curvature potentials of $G$ have zero mean, then $\iota(G) := \infty$.
    \end{definition}
    
    That $X(G)\cap\mathbb{R}\one$ consists of a single point is a nontrivial fact for which we present a proof in~\cref{sec:curvature-index} (see~\cref{prop:index-of-curvature}). It is straightforward to show that $\iota(G) = 0$ if and only if $G$ is distance exceptional. As a result, by developing a calculus for this invariant under graph operations, we can recover and generalize many of the known results about distance exceptional graphs, in addition to providing a variety of means for constructing examples of these graphs with desired properties. Below we state several of our main results.

    \begin{theorem}[See \cref{thm:index-product}, \cref{thm:merging}]\label{thm:intro-calculus}
        Let $G, H$ be connected graphs with $\iota(G),\iota(H) < \infty$. Then
            \begin{enumerate}
                \item $\iota(G\,\Box\,H) = \iota(G) + \iota(H)$,
                \item For each $u\in V(G), v\in V(H)$, $\iota(G\,\Circ_{u,v}\, H) = \iota(G) + \iota(H)$.
            \end{enumerate}
        If either $\iota(G) = \infty$ or $\iota(H) = \infty$, then $\iota(G\,\Box\,H) = \iota(G\,\Circ_{u,v}\, H) = \infty$.
    \end{theorem}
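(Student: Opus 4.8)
The plan is to reduce both identities to the geometric content of \cref{defn:curvature-index} together with \cref{prop:index-of-curvature}: the set $X(G)\cap\mathbb{R}\one$ is nonempty if and only if $\iota(G)<\infty$, and in that case it is the single point $\iota(G)\one$. Equivalently, $\iota(G)<\infty$ holds precisely when there is a vector $\x$ with $\distmatrix_G\x=\iota(G)\one$ and $\one^\top\x=1$ (this covers the distance exceptional case $\iota(G)=0$, where $\x$ is a suitable kernel vector not orthogonal to $\one$, as well as the case of a curvature potential with nonzero mean). So for each of the two operations it suffices to carry out two steps: \emph{(a)} given $\iota(G),\iota(H)<\infty$, exhibit a vector $\x$ for the resulting graph with $\distmatrix\x=(\iota(G)+\iota(H))\one$ and $\one^\top\x=1$; and \emph{(b)} conversely, given such a witness for the resulting graph with constant $c$, produce witnesses for $G$ and for $H$ with constants summing to $c$, which forces $\iota(G),\iota(H)<\infty$. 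Step (a) gives $\iota(\cdot)=\iota(G)+\iota(H)$ whenever both indices are finite; step (b) gives the reverse finiteness, so the contrapositive of (b) is exactly the $\infty$ clause.

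For the Cartesian product I would use $d_{G\Box H}((u,v),(u',v'))=d_G(u,u')+d_H(v,v')$, which expresses $\distmatrix_{G\Box H}$ as $\distmatrix_G$ tensored with the all-ones matrix over $V(H)$ plus the all-ones matrix over $V(G)$ tensored with $\distmatrix_H$. For (a), choose $\x_G,\x_H$ with $\distmatrix_G\x_G=\iota(G)\one$, $\one^\top\x_G=1$ and similarly for $H$, and set $\x(u,v):=\x_G(u)\x_H(v)$; using $\sum_v\x_H(v)=1$ and $\sum_u\x_G(u)=1$ one computes directly that $\distmatrix_{G\Box H}\x=(\iota(G)+\iota(H))\one$ and $\one^\top\x=1$. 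For (b), given a witness $\x$ for $G\Box H$ with constant $c$, form the partial sums $\x_G^{\star}(u):=\sum_v\x(u,v)$ and $\x_H^{\star}(v):=\sum_u\x(u,v)$; the distance identity gives $(\distmatrix_{G\Box H}\x)(u,v)=(\distmatrix_G\x_G^{\star})(u)+(\distmatrix_H\x_H^{\star})(v)$, so the left-hand side being constant forces $\distmatrix_G\x_G^{\star}=c_1\one$ and $\distmatrix_H\x_H^{\star}=c_2\one$ with $c_1+c_2=c$, while $\one^\top\x_G^{\star}=\one^\top\x_H^{\star}=\one^\top\x=1$. Hence $c_1\one\in X(G)$ and $c_2\one\in X(H)$, so by \cref{prop:index-of-curvature} $\iota(G)=c_1$ and $\iota(H)=c_2$ are finite and sum to $c$.

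For vertex coalescence $G\Circ_{u,v}H$, write $w$ for the identified vertex and use that the distance is $d_G$ on the $G$-side, $d_H$ on the $H$-side, and $d_G(x,u)+d_H(v,y)$ across; equivalently the off-diagonal block of $\distmatrix$ is $\mathbf{c}_G\one^\top+\one\mathbf{r}_H^\top$, where $\mathbf{c}_G,\mathbf{r}_H$ are the $u$-column of $\distmatrix_G$ and the $v$-column of $\distmatrix_H$. For (a), zero-pad $\x_G,\x_H$ to all of $V(G\Circ_{u,v}H)$ (sending $v$ to $w$) to get $\widetilde{\x}_G,\widetilde{\x}_H$, and set $\x:=\widetilde{\x}_G+\widetilde{\x}_H-e_w$, with $e_w$ the indicator of $w$; a bookkeeping computation shows the $\mathbf{c}_G$- and $\mathbf{r}_H$-contributions cancel, leaving $\distmatrix\x=(\iota(G)+\iota(H))\one$, while $\one^\top\x=1+1-1=1$. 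For (b), given a witness $\x$ for the coalescence with constant $c$, restrict it to the $G$-side (placing the value $\x(w)$ at $u$) and add the correction $\bigl(\sum_{y}\x(y)\bigr)e_u$ summed over the $H$-side minus $w$; the cross-block structure yields $\distmatrix_G(\cdot)=(c-T)\one$ with $T:=\sum_{y}d_H(v,y)\x(y)$ over that same index set, and the symmetric construction on the $H$-side gives constant $c-T'$; evaluating $(\distmatrix\x)(w)=c$ shows $T+T'=c$, so $\iota(G)+\iota(H)=(c-T)+(c-T')=c$, again with both indices finite by \cref{prop:index-of-curvature}.

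The main obstacle is the coalescence bookkeeping in (a) and (b): one must ensure the merged vertex $w$ is counted exactly once and that the shared value $\x(w)$ interacts correctly with the split term $d_G(x,u)+d_H(v,y)$ — the inclusion–exclusion correction $-e_w$ in (a) (resp. the additive correction $+\bigl(\sum\x\bigr)e_u$ in (b)) is precisely what makes the mixed contributions telescope. The Cartesian-product computations are routine once the Kronecker-type identity is in hand, and in both operations the final step is the same invocation of the single-point property: the constructed vectors have $\one$-sum equal to $1$, which is exactly what places the constant multiples of $\one$ inside the relevant affine spaces $X(\cdot)$ and lets us identify the constant with $\iota(\cdot)$ and conclude finiteness.
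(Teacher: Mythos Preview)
Your proposal is correct. Part~(a) for both operations is essentially identical to the paper's argument: for the Cartesian product you use the tensor witness $\x_G\otimes\x_H$ against the identity $\distmatrix(G\,\Box\,H)=\distmatrix(G)\otimes J+J\otimes\distmatrix(H)$, and for the coalescence you use the inclusion--exclusion vector $\widetilde{\x}_G+\widetilde{\x}_H-e_w$, which unpacks to exactly the paper's block vector $[\alpha_G+\alpha_H-1,\,\y_G,\,\y_H]^\top$.

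Where you genuinely diverge is in the $\infty$ case. The paper handles $\iota(G)=\infty$ by a direct forward construction: it takes a zero-mean curvature potential of $G$, extends it to the compound graph, and then argues separately that \emph{(i)} the compound graph is not distance exceptional and \emph{(ii)} every curvature potential of the compound graph has zero mean (for the product this even requires a case split on whether $\iota(H)=0$). Your route instead proves a converse statement~(b): any witness $\x$ for the compound graph with $\one^\top\x=1$ can be pushed down to witnesses for $G$ and $H$ individually (via marginals for $\Box$, and via the correction $[\x(w)+\one^\top\y_H,\,\y_G]$ for $\Circ$), with constants summing to the original. The contrapositive of~(b) is then exactly the $\infty$ clause, with no case analysis. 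I checked your coalescence~(b) against the block decomposition: with $T=\y_1^\top\y_H$ and $T'=\x_1^\top\y_G$, the $w$-row of $\distmatrix\x=c\one$ gives $T+T'=c$, and the $G$-rows give $\distmatrix_G[\x(w)+\one^\top\y_H,\,\y_G]^\top=(c-T)\one$, so the arithmetic $(c-T)+(c-T')=c$ is sound. Your approach is more uniform and buys you the extended-real identity in one stroke; the paper's approach is more explicit about what the zero-mean potential on the compound graph actually looks like.
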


    \Cref{thm:intro-calculus} is proved in~\cref{sec:curvature-index}. We also consider the case of $G+ H$ in~\cref{thm:index-join}, which is more complicated. Together these results generalize the closure properties of distance exceptional graphs mentioned earlier. We also compute a number of explicit examples of curvature indices for well known graph families, including cycles, hypercubes, trees, and complete multipartite graphs (see~\crefrange{thm:transitive-tree}{thm:kmn}).

    \begin{theorem}\label{thm:intro-rational-index}
        Let $q\in\mathbb{Q}$. Then there exists a graph $G$ such that $\iota(G) = q$.
    \end{theorem}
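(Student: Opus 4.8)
The plan is to realize every rational number by gluing together a short list of explicitly computed curvature indices, exploiting the additivity of $\iota$ under vertex coalescence (\cref{thm:intro-calculus}(2)). Let $S=\{\iota(G):G\text{ connected},\ \iota(G)<\infty\}$. By that additivity $S$ is closed under addition; and $S\subseteq\mathbb{Q}$ because $\distmatrix$ has integer entries, so the affine space $X(G)$ and the line $\mathbb{R}\one$ are defined over $\mathbb{Q}$ and their unique intersection point is a rational multiple of $\one$. It therefore suffices to produce enough members of $S$ and then argue purely arithmetically that they already generate $\mathbb{Q}$.

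The library consists of two computations. First, $\iota(\Complete_n)=1-\tfrac1n$ for every $n\ge2$: for the complete graph $\distmatrix=J-I$, so any $\x$ with $\x^\top\one=1$ has $\distmatrix\x=(\one^\top\x)\one-\x=\one-\x$, which lies on $\mathbb{R}\one$ precisely when $\x=\tfrac1n\one$, whence $\distmatrix\x=\tfrac{n-1}{n}\one$; in particular $\tfrac12=\iota(\Complete_2)\in S$. Second, $-\tfrac12\in S$: take the basket graph $\Basket_3$ (the case $j=1$, $s=-2$ of \cref{thm:basket-jailbreak-intro}), attach three pendant vertices in any fashion to obtain a graph $G^*$, and attach one more pendant to obtain a graph $G$, which by \cref{thm:basket-jailbreak-intro} is distance exceptional, i.e.\ $\iota(G)=0$. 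Adding a pendant at a vertex $u$ is exactly coalescence with $\Complete_2$ at $u$, so $\iota(G)=\iota(G^*)+\iota(\Complete_2)$ provided $\iota(G^*)<\infty$; and $\iota(G^*)<\infty$ holds because otherwise the $\infty$-propagation clause of \cref{thm:intro-calculus} would force $\iota(G)=\infty\ne0$. Hence $\iota(G^*)=-\tfrac12$.

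The arithmetic step is then immediate. Since $S$ is closed under addition and contains $\tfrac12$ and $-\tfrac12$, it contains $a\cdot\tfrac12+b\cdot(-\tfrac12)=\tfrac{a-b}{2}$ for all $a,b\ge0$, hence every half-integer, and in particular $\mathbb{Z}\subseteq S$. Now fix $n\ge2$. Since $\mathbb{Z}\subseteq S$ and $1-\tfrac1n=\iota(\Complete_n)\in S$, the number $k+t\bigl(1-\tfrac1n\bigr)$ lies in $S$ for all $k\in\mathbb{Z}$ and $t\in\mathbb{Z}_{\ge0}$. Given any $c\in\mathbb{Z}$, choose $t\in\{0,\dots,n-1\}$ with $t\equiv-c\pmod n$ and set $k=\tfrac{c+t}{n}-t\in\mathbb{Z}$; then $k+t\bigl(1-\tfrac1n\bigr)=\tfrac cn$. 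Thus $\tfrac1n\mathbb{Z}\subseteq S$ for every $n\ge2$, so $S\supseteq\bigcup_{n\ge2}\tfrac1n\mathbb{Z}=\mathbb{Q}$, and $S=\mathbb{Q}$, which is the claim.

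The one genuinely nontrivial ingredient — and the expected obstacle — is producing a single graph of \emph{negative} curvature index. Every standard family whose index is computed elsewhere in the paper (trees, complete multipartite graphs, cycles, hypercubes) turns out to have non-negative index, so an input that is "tuned" to be pathological is unavoidable; the basket graphs of \cref{thm:basket-jailbreak-intro}, built precisely so that adding pendants drives the index to $0$, supply the value $-\tfrac12$ that unlocks all of $\mathbb{Q}$. Everything else is bookkeeping with \cref{thm:intro-calculus}.
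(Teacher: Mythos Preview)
Your proof is correct and takes a genuinely different route from the paper's. The paper's argument (\cref{thm:rational-index}) draws its negative indices from the complete multipartite family: \cref{thm:kmn} gives $\iota(\Complete_{1,1,k})=-\tfrac{2}{k-4}$ for $k\ge5$, so in particular $\iota(\Complete_{1,1,2n+4})=-\tfrac1n$ for every $n\ge1$. One then writes $-q$ (for $q<0$), or $m-q$ (for $q>0$ with an integer $m>q$), as an Egyptian fraction $\sum 1/n_i$ and coalesces or Cartesian-multiplies the corresponding $\Complete_{1,1,2n_i+4}$'s together with copies of $\Path_3$ (index $1$). Your approach is more parsimonious in its library --- a single negative value $-\tfrac12$ plus $\iota(\Complete_n)=1-\tfrac1n$ --- and trades the Egyptian-fraction decomposition for a short modular-arithmetic step; the cost is that your source of negativity, the basket machinery of \cref{thm:basket-jailbreak-intro}, is considerably heavier than the one-line multipartite computation the paper uses. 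Your contrapositive extraction of $\iota(G^*)=-\tfrac12$ from the $\infty$-propagation clause of \cref{thm:intro-calculus} is a nice touch.

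One factual slip in your closing commentary: complete multipartite graphs do \emph{not} all have nonnegative index --- indeed $\iota(\Complete_{1,1,k})<0$ for every $k\ge5$ by \cref{thm:kmn}, and this is exactly the negativity source in the paper's own proof. The slip sits in the commentary only and does not affect the validity of your argument, but it does undercut the claim that the basket graphs are unavoidable here.
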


    \Cref{thm:intro-rational-index} is proved in~\cref{sec:embeddings}, and the proof we offer is constructive, utilizing Egyptian fractions along with graph products and vertex coalescences to exhibit a graph $G$ whose curvature index achieves any given rational number. 

    \begin{theorem}\label{thm:intro-embedding}
        Let $G$ be a graph, connected or otherwise. Then there exists a distance exceptional graph $G'$ containing a copy of $G$ as an induced subgraph. If $G$ is connected and $\iota(G) < \infty$, then this embedding can be taken to be isometric. 
    \end{theorem}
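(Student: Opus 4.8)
The plan is to deduce the theorem from the curvature calculus (\cref{thm:intro-calculus}), the realization result \cref{thm:intro-rational-index}, and the join formula \cref{thm:index-join}, using two easy observations. First, whenever $\iota(G)$ is finite it is in fact a \emph{rational} number: by \cref{prop:index-of-curvature} one has $\iota(G) = 1/(\one^\top\x)$ for any curvature potential $\x$ with $\one^\top\x\ne 0$, and since $\distmatrix$ is an integer matrix the affine space $\{\x : \distmatrix\x = \one\}$ is rational, so in the finite case (where by definition some such $\x$ exists) one can choose $\x$ rational with $\one^\top\x\ne 0$. Second, in a vertex coalescence $G\,\Circ_{u,v}\,H$ the induced copy of $G$ is \emph{isometric}: any walk between two vertices of $G$ that enters $V(H)$ must pass through the cut vertex at least twice, and deleting that excursion yields a walk inside $G$ of no greater length, so $d_{G\,\Circ_{u,v}\,H}$ restricted to $V(G)\times V(G)$ equals $d_G$.

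I would first treat the case that $G$ is connected with $\iota(G)<\infty$, which already yields the isometric assertion. If $G$ is distance exceptional, take $G' = G$. Otherwise set $q := \iota(G)$, a nonzero rational by the first observation, and use \cref{thm:intro-rational-index} to fix a (necessarily connected) graph $H$ with $\iota(H) = -q$. Pick any $u\in V(G)$ and $v\in V(H)$ and put $G' := G\,\Circ_{u,v}\, H$. Then \cref{thm:intro-calculus} gives $\iota(G') = \iota(G) + \iota(H) = 0$, so $G'$ is distance exceptional, and by the second observation the inclusion $G\hookrightarrow G'$ is an isometric induced embedding.

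For the first assertion in general --- $G$ possibly disconnected, possibly with $\iota(G) = \infty$ --- the strategy is to reduce to the previous case by first enlarging $G$ to a connected graph of finite curvature index. To do this I would pass to a graph join $\widetilde G := G + H_0$, which is always connected and contains $G$ as an induced subgraph regardless of the connectivity of $G$. The point is that $\widetilde G$ has diameter at most two, so $\distmatrix_{\widetilde G} = 2(J-I)-A_{\widetilde G}$ and the value of $\iota(\widetilde G)$ depends only on the adjacency structure of $\widetilde G$, not on $\distmatrix_G$; concretely --- and compatibly with the precise statement of \cref{thm:index-join} --- the defining equation $\distmatrix_{\widetilde G}\x = t\one$, $\one^\top\x = 1$ reduces to $(A_{\widetilde G}+2I)\x = (2-t)\one$, $\one^\top\x = 1$, which is solved by $t = 2 - 1/\bigl(\one^\top(A_{\widetilde G}+2I)^{-1}\one\bigr)$ as soon as $A_{\widetilde G}+2I$ is invertible and $\one^\top(A_{\widetilde G}+2I)^{-1}\one\ne 0$. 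One then checks that some member of a simple one-parameter family of candidates for $H_0$ (for instance $\overline{\Complete_m}$ or $\Complete_m$ for appropriate $m$) makes both conditions hold, so that $\iota(\widetilde G)<\infty$. Applying the construction of the previous paragraph to $\widetilde G$ produces a distance exceptional $G'$ containing $\widetilde G$, and hence $G$, as an induced subgraph.

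The routine ingredients are the rationality of $\iota$ and the distance-preservation under coalescence; the main obstacle is the last step --- showing that for an \emph{arbitrary} host $G$ some join partner $H_0$ makes $\iota(G+H_0)$ finite. This is exactly where \cref{thm:index-join} is needed, and it requires a little case analysis to rule out the degenerate possibilities that $-2$ is forced into the spectrum of $A_{G+H_0}$, or that the relevant all-ones bilinear form degenerates, for every choice of $H_0$ in the chosen family. Once that lemma is in hand, the recipe ``join to make the index finite, then coalesce to drive it to zero'' completes the proof.
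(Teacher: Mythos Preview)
Your overall strategy---coalesce with a graph of opposite index when $\iota(G)<\infty$, and first pass to a join $G+H_0$ to force finiteness otherwise---is exactly the paper's approach. The paper's proof of the ``main obstacle'' you flag is, however, much shorter than the spectral case analysis you sketch: one simply takes $H_0 = 2\Complete_1$ (your candidate $\overline{\Complete_m}$ at $m=2$), observes $\widetilde\iota(2\Complete_1)=1$, and reads off from \cref{thm:index-join} that $\iota(G+2\Complete_1)=1$ in \emph{every} case of that theorem---the fraction $(\widetilde\iota(G)\cdot 1-1)/(\widetilde\iota(G)+1-2)$ collapses to $1$, the $(1,1)$ case gives $1$, and the $\widetilde\iota(G)=\infty$ case gives $\widetilde\iota(H_0)=1$. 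So no analysis of the spectrum of $A_{G+H_0}+2I$ or of the bilinear form is needed, and there is nothing left to ``check.'' Your added remark that $\iota(G)\in\mathbb{Q}$ whenever finite is a useful point the paper leaves implicit when invoking \cref{thm:intro-rational-index}.
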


    \Cref{thm:intro-embedding} is proved in~\cref{sec:embeddings}, and is supplemented by~\cref{alg:X}, which describes in an algorithmic fashion how to construct such embeddings in practice. We follow this with two explicit examples.
    
    With these results in hand, we remark that our motivational example~\cref{thm:basket-jailbreak-intro} can be recast as follows. The curvature index of $\Basket_{2j+1}$ is given by~\cref{eq:basket-iota}, and the addition of a pendant has the effect of a coalescence with a copy of $\Complete_2$, which increases the index by $\frac{1}{2}$. Repeating this $2|s|$ times in an otherwise arbitrary manner results in a distance exceptional graph.
    
    Beyond the work presented herein, there are many promising directions for future research. For instance, these authors feel it would be interesting to investigate more deeply the structure of graphs with infinite curvature index. It would also be interesting to explore further applications of the curvature index in constructing distance exceptional graphs with additional properties, such as planarity or bounded degree.

    We remark additionally that this framework leads to natural conjectures on the behavior of the curvature index in the setting of Erd\H{o}s--R\'{e}nyi random graphs. In the regime where $p\in(0,1)$ and $n\to\infty$, it is not hard to prove that $\mathrm{diam}(G)=2$ with probability at least $1-o(1)$, and moreover, the tail probabilities for this event are quite forgiving. Conditional on $\mathrm{diam}(G)=2$, it follows that $\distmatrix(G) = 2(J - I) - A$, so that in expectation,
        \begin{align*}
            \ep{2(J - I) - A} &= 2(J - I) - p(J - I) = (2 - p)(J - I).
        \end{align*}
    This matrix satisfies
        \begin{align*}
            \one^\top \ep{2(J - I) - A}^+ \one &= \frac{1}{2-p}(1+o(1)).
        \end{align*}
    Based on~\cref{prop:representations-of-index}, it is therefore reasonable to conjecture that in the Erd\H{o}s--R\'{e}nyi regime described above, the curvature index of a random graph concentrates near $2-p$ with high probability. Note that any quantitative resolution of this conjecture could conceivably admit, in the setting $p=1/2$, some insight into the fraction of labeled graphs which are distance exceptional. This problem remains open.

    This paper is organized as follows. In~\cref{sec:basket-example}, we consider the setting of~\cref{obs:jailbroken-baskets} more closely and offer an explanation for this phenomenon at a low level to better motivate the subsequent sections. In~\cref{sec:curvature-index}, we rigorously define the curvature index and investigate its behavior under various graph operations. In~\cref{sec:computing-index}, we compute the value of this invariant across a number of graph families of interest. Finally, in~\cref{sec:embeddings}, we prove~\cref{thm:intro-rational-index} and~\cref{thm:intro-embedding}, and exhibit examples of the embeddings mentioned therein. 

    \section{Basket graphs and adding pendants}\label{sec:basket-example}

    In this section we give context for our main results and for the constructions that use them by taking a closer look at the family of distance exceptional graphs highlighted in~\cref{obs:jailbroken-baskets}. These examples arise from a common template we call the \emph{basket graphs}, defined in the previous section (see~\cref{def:basket}), and illustrated in~\cref{fig:basket}.

    \begin{figure}[t!]
        \centering
        \begin{tikzpicture}[x=1cm,y=1cm,
  vtx/.style={circle, draw, fill=black, inner sep=2pt, minimum size=6pt},
  every node/.style={font=\footnotesize}
]

%========================
% Left: a concrete copy B_3 with explicit enumeration v_1,...,v_9
%   -- one P_k (k=3) across the top
%   -- three P_{k+1} (k=3 -> P_4) below
%========================
\begin{scope}[shift={(0,0)}]
  % endpoints (shared by all four paths)
  \node[vtx,label=left:$v_1$] (L1) at (0,0) {};
  \node[vtx,label=right:$v_3$] (R1) at (5,0) {};

  % top path: P_k with k=3 -> one internal vertex (v_2)
  \node[vtx,label=above:$v_2$] (t) at (2.5,1.8) {};
  \draw[thick] (L1) -- (t) -- (R1);

  % three P_{k+1} paths with k=3 -> two internal vertices each
  % first longer path: v_4, v_5
  \node[vtx,label=above:$v_4$] (a1) at (1.67,0.6) {};
  \node[vtx,label=above:$v_5$] (a2) at (3.33,0.6) {};
  \draw[thick] (L1) -- (a1) -- (a2) -- (R1);

  % second longer path: v_6, v_7
  \node[vtx,label=above:$v_6$] (b1) at (1.67,-0.2) {};
  \node[vtx,label=above:$v_7$] (b2) at (3.33,-0.2) {};
  \draw[thick] (L1) -- (b1) -- (b2) -- (R1);

  % third longer path: v_8, v_9
  \node[vtx,label=above:$v_8$] (c1) at (1.67,-1.0) {};
  \node[vtx,label=above:$v_9$] (c2) at (3.33,-1.0) {};
  \draw[thick] (L1) -- (c1) -- (c2) -- (R1);

  % title
  \node[font=\normalsize] at (2.5,2.6) {$\mathcal{B}_3$};
\end{scope}

%========================
% Right: a general sketch B_k with the enumeration convention
%   -- endpoints are v_1 (left) and v_k (right)
%   -- top path is the unique P_k
%   -- the three lower paths are P_{k+1}
%========================
\begin{scope}[shift={(8,0)}]
  % endpoints
  \node[vtx,label=left:$v_1$] (L2) at (0,0) {};
  \node[vtx,label=right:$v_{k}$] (R2) at (5,0) {};

  % top path P_k: internal vertices v_2,...,v_{k-1}
  \node[vtx,label=above:$v_2$] (u2) at (1.0,1.6) {};
  \node (udots) at (2.5,1.6) {$\cdots$};
  \node[vtx,label=above:$v_{k-1}$] (ukm1) at (4.0,1.6) {};
  \draw[thick] (L2) -- (u2) -- (udots) -- (ukm1) -- (R2);

  % Three P_{k+1} paths: each has k-1 internal vertices
  % First of the three paths: internal vertices v_{k+1},...,v_{2k-1}
  \node[vtx, label=above:$v_{k+1}$] (a1) at (1.2,0.6) {};
  \node (adots) at (2.5,0.6) {$\cdots$};
  \node[vtx, label=above:$v_{2k-1}$] (a2) at (3.8,0.6) {};
  \draw[thick] (L2) -- (a1) -- (adots) -- (a2) -- (R2);

  % Second of the three paths: internal vertices v_{2k},...,v_{3k-2}
  \node[vtx, label=above:$v_{2k}$] (b1) at (1.2,-0.2) {};
  \node (bdots) at (2.5,-0.2) {$\cdots$};
  \node[vtx, label=above:$v_{3k-2}$] (b2) at (3.8,-0.2) {};
  \draw[thick] (L2) -- (b1) -- (bdots) -- (b2) -- (R2);

  % Third of the three paths: internal vertices v_{3k-1},...,v_{4k-3}
  \node[vtx, label=above:$v_{3k-1}$] (c1) at (1.2,-1.0) {};
  \node (cdots) at (2.5,-1.0) {$\cdots$};
  \node[vtx, label=above:$v_{4k-3}$] (c2) at (3.8,-1.0) {};
  \draw[thick] (L2) -- (c1) -- (cdots) -- (c2) -- (R2);

  % title
  \node[font=\normalsize] at (2.5,2.6) {$\mathcal{B}_k$};
\end{scope}

\end{tikzpicture}
        \caption{Two illustrations of the basket graphs $\Basket_k$ with their conventional vertex enumerations. Left: $\Basket_3$. Right: $\Basket_k$. The name ``basket'' reflects the slightly shorter upper path serving as a ``handle'' above the three lower paths.}\label{fig:basket}
    \end{figure}
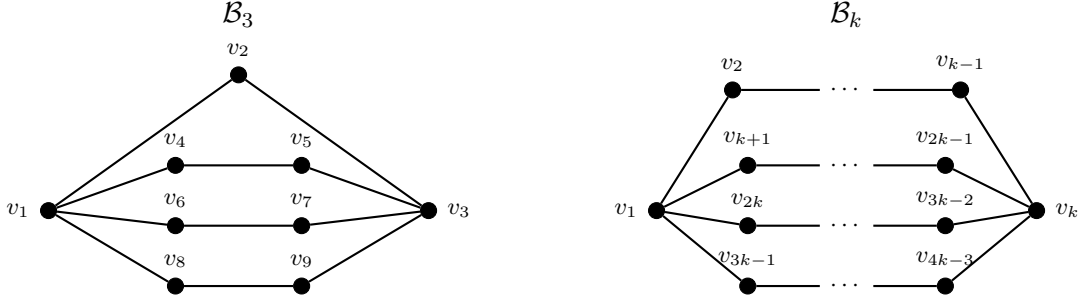

    Since $\distmatrix$ is symmetric, the linear system $\distmatrix\x=\one$ has a solution if and only if $\one\in\mathrm{range}(\distmatrix)$, i.e., if and only if $\mathbb{R}\one\perp \mathrm{ker}(\distmatrix)$. Equivalently:

    \begin{lemma}\label{lem:dne-positive-sum-kernel}
        $\distmatrix\x  = \one $ does not admit a solution if and only if there exists $\x \in\mathbb{R}^{n}$ such that $\one^\top\x  > 0$ and $\distmatrix\x  = \zero$. 
    \end{lemma}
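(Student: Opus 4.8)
The statement is an elementary linear-algebra dichotomy, and the plan is to establish it via the standard orthogonal-decomposition facts for a symmetric matrix, phrased in the contrapositive form already hinted at in the sentence preceding the lemma. First I would record the background fact that for a symmetric matrix $\distmatrix\in\mathbb{R}^{n\times n}$ we have the orthogonal direct sum decomposition $\mathbb{R}^n = \mathrm{range}(\distmatrix)\oplus\mathrm{ker}(\distmatrix)$, so that $\one\in\mathrm{range}(\distmatrix)$ if and only if $\one\perp\mathrm{ker}(\distmatrix)$. Consequently, $\distmatrix\x=\one$ has \emph{no} solution precisely when $\one\not\perp\mathrm{ker}(\distmatrix)$, i.e., when there exists $\y\in\mathrm{ker}(\distmatrix)$ with $\one^\top\y\ne 0$.

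The remaining step is to upgrade ``$\one^\top\y\ne 0$'' to ``$\one^\top\x>0$'': given such a $\y$ with $\distmatrix\y=\zero$, set $\x=\y$ if $\one^\top\y>0$ and $\x=-\y$ if $\one^\top\y<0$. In either case $\distmatrix\x=\zero$ (the kernel is a subspace, closed under negation) and $\one^\top\x>0$, proving the forward direction. For the converse, if some $\x$ satisfies $\distmatrix\x=\zero$ and $\one^\top\x>0$, then $\x\in\mathrm{ker}(\distmatrix)$ is not orthogonal to $\one$, so $\one\notin\mathrm{range}(\distmatrix)$ and the system $\distmatrix\x=\one$ is unsolvable. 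This closes the equivalence.

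There is no real obstacle here; the only thing to be careful about is making explicit the use of symmetry of $\distmatrix$ (which guarantees $\mathrm{range}(\distmatrix)^\perp=\mathrm{ker}(\distmatrix)$ rather than merely $\mathrm{range}(\distmatrix)^\perp=\mathrm{ker}(\distmatrix^\top)$), and the sign-flipping trick that turns the nonvanishing condition into strict positivity. I would write the proof in two short paragraphs, one for each implication, invoking the symmetric-matrix decomposition by name.
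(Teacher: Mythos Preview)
Your proposal is correct and mirrors the paper's own treatment: the paper does not give a formal proof but simply records, in the sentence immediately preceding the lemma, that symmetry of $\distmatrix$ gives $\one\in\mathrm{range}(\distmatrix)\iff\mathbb{R}\one\perp\mathrm{ker}(\distmatrix)$, and declares the lemma equivalent. Your write-up fleshes out exactly this argument, including the sign-flip to convert $\one^\top\y\ne 0$ into $\one^\top\x>0$, which the paper leaves implicit.
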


    \Cref{lem:dne-positive-sum-kernel} can be exploited to formalize the construction of graphs such as those shown in~\cref{obs:jailbroken-baskets} as follows. Initially, we establish the existence of a vector satisfying $\one^\top\x  = 1$ and $\distmatrix\x  = r \one $ for some $r\in\mathbb{Z}$ with $r < 0$. We then modify the graph in such a way as to obtain a new vector $\x'$ satisfying \emph{(i)} $\one^\top\x' = 1$, and \emph{(ii)}, $\distmatrix'\x' = (r+1/2) \one $, where $\distmatrix'$ is the distance matrix of the modified graph. By repeating this $2|r|$ times, we obtain a distance exceptional graph via~\cref{lem:dne-positive-sum-kernel}. The following lemma is the first step in this process.

    \begin{lemma}\label{lem:curvature-basket}
        Let $k\geq 3$ and set $n := |V(\Basket_k)| = 4k - 3$. Then there exists a vector $\x \in\mathbb{R}^{n}$ satisfying
            \begin{align}\label{eq:alpha-k}
                \one^\top\x  = 1, \quad \distmatrix\x  = \frac{(-3)^k+4k-1}{8} \one .
            \end{align}
    \end{lemma}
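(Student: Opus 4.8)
The plan is to solve the linear system $\distmatrix\x = c\one$ together with $\one^\top\x = 1$ by hand, exploiting the large automorphism group of $\Basket_k$. Write $a,b$ for the two degree-four ``hub'' vertices; the handle is an isometric path $a=p_0,p_1,\dots,p_{k-1}=b$, and each of the three long paths is an isometric path $a=q^{(t)}_0,\dots,q^{(t)}_{k}=b$ for $t=1,2,3$. First I would record the distance formulas: within each of these four segments the distance is the along-segment distance, while between vertices on different segments it is the smaller of the two hub-routes, e.g. $d(p_m,q^{(t)}_\ell)=\min(m+\ell,\,2k-1-m-\ell)$ and $d(q^{(s)}_m,q^{(t)}_\ell)=\min(m+\ell,\,2k-m-\ell)$ for $s\ne t$. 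Since the automorphism group contains $\mathrm{Sym}(3)$ (permuting the long paths) and the reflection exchanging $a\leftrightarrow b$, it suffices to look for $\x$ constant on orbits: value $\alpha$ at $a$ and $b$, value $\beta_m$ at $p_m$ with $\beta_m=\beta_{k-1-m}$, value $\gamma_m$ at each $q^{(t)}_m$ with $\gamma_m=\gamma_{k-m}$. Then $\distmatrix\x=c\one$ collapses to one equation at a hub, a family of equations $(\distmatrix\x)_{p_m}=c$ along the handle, a family $(\distmatrix\x)_{q^{(t)}_m}=c$ along a long path, and the normalization.

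The engine of the argument is the discrete second-difference identity along isometric geodesics: if $v_0,\dots,v_r$ is an isometric path, then $m\mapsto(\distmatrix\x)_{v_m}$ is piecewise linear and its second difference at an interior $v_m$ equals $2x_{v_m}$ minus a correction of $1$ or $2$ for each vertex $u$ whose two shortest routes to $v_m$ (here, the two hub-routes) cross at $v_m$. In $\Basket_k$ these crossings are governed by the tent functions $\min(\cdot,C_\ell-\cdot)$ with $C_\ell=2k-1-\ell$ and $C_\ell=2k-\ell$ appearing above, so the corrections are explicit. Imposing $(\distmatrix\x)_{\bullet}\equiv c$ and taking second differences along the handle and along a long path then forces, for interior indices, the two linear relations $\beta_m=\tfrac32(\gamma_m+\gamma_{m+1})$ and $\gamma_m=-\tfrac12(\beta_{m-1}+\beta_m)$ (after applying the reflection symmetry to the tie-terms). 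Eliminating $\gamma$ yields the three-term recurrence $\beta_{m-1}+\tfrac{10}{3}\beta_m+\beta_{m+1}=0$, whose characteristic polynomial $r^2+\tfrac{10}{3}r+1$ has roots $-3$ and $-\tfrac13$; together with $\beta_m=\beta_{k-1-m}$ this gives $\beta_m=A\bigl((-3)^m+(-3)^{k-1-m}\bigr)$ for a single scalar $A$, with $\gamma_m$ then determined for interior $m$. This is exactly the source of the $(-3)^k$ in \eqref{eq:alpha-k}.

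What remains is a linear system of fixed size, independent of $k$: the unknowns are $A$, the hub value $\alpha$, the constant $c$, and the $O(1)$ boundary values of $\gamma$ (and, for small $k$, of $\beta$) adjacent to the hubs that the interior recurrence does not reach; the closing equations are the hub equation, the handle equation at $p_1$, the long-path equation at $q^{(t)}_1$, and $\one^\top\x=1$. Its coefficients are explicit — polynomial in $k$ where they come from $d(a,b)=k-1$ and transmission sums like $\sum_\ell \ell$, and geometric where they come from $\sum_\ell(-3)^\ell$ — so solving it and simplifying produces $\x$ with $\one^\top\x=1$ and $\distmatrix\x=\frac{(-3)^k+4k-1}{8}\one$: the $(-3)^k$ is the geometric part, the $4k-1$ the polynomial-plus-normalization part, and the common denominator $8$ is cleared by the normalization. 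As a sanity check, for $k=3$ the interior recurrence is vacuous and one solves a $3\times 3$ system directly, obtaining $\alpha=5$, $\beta=-3$, $\gamma=-1$, $c=-2=\tfrac{(-3)^3+11}{8}$.

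The main obstacle I anticipate is the bookkeeping of the boundary layer at the two hubs: one must pin down precisely which indices $m$ each second-difference identity is valid for, hence how many boundary values of $\beta,\gamma$ are genuinely free, so that exactly the right number of independent closing equations is used; and the degenerate small cases $k=3,4$, where the interior recurrence range is empty, must be dispatched by direct computation. Once this accounting is done, the automorphism reduction and the recurrence are routine, and the stated value of $c$ follows from elementary algebra.
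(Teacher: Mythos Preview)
Your approach is correct and genuinely different from the paper's. The paper does not derive the vector $\x$: it simply \emph{posits} the closed form
\[
\vec a^{(m)}_i=\tfrac12\bigl((-3)^{i-1}+(-3)^{m-i}\bigr),\qquad
\x=\bigl(\vec a^{(k)};\,\vec a^{(k-1)};\,\vec a^{(k-1)};\,\vec a^{(k-1)}\bigr),
\]
and then verifies by direct block-matrix computation that $\one^\top\x=1$ and that consecutive entries of $D\x$ agree, finally evaluating the common value. Your route---reduce by the $\mathrm{Sym}(3)\times\mathbb{Z}/2$ symmetry, take discrete second differences along the four isometric arcs, and read off the coupled relations $\beta_m=\tfrac32(\gamma_m+\gamma_{m+1})$, $\gamma_m=-\tfrac12(\beta_{m-1}+\beta_m)$---actually \emph{discovers} the formula: the three-term recurrence $\beta_{m-1}+\tfrac{10}{3}\beta_m+\beta_{m+1}=0$ with roots $-3,-\tfrac13$ explains why powers of $-3$ appear at all, and the reflection symmetry forces the symmetric combination $(-3)^m+(-3)^{k-1-m}$. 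Your interior relations are correct (I checked them against the explicit distance formulas), your $k=3$ sanity check matches the paper's vector exactly, and once the recurrence is solved the hub value turns out to obey the very same formula, $\alpha=\beta_0=A(1+(-3)^{k-1})$, so the boundary system is even smaller than you suggest. What your approach buys is insight into the origin of the constant; what the paper's buys is that, with the answer in hand, the verification is a clean telescoping computation with no boundary-layer case analysis at all.
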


    The proof of~\cref{lem:curvature-basket} is computational and is deferred to~\cref{sec:appendix-basket} in order not to interrupt the flow of this article. The key idea is to exploit the symmetry of $\Basket_k$ to reduce the system $\distmatrix\x  = r \one $ to a smaller system of linear equations, which can then be solved by hand. In the language of curvature indices introduced in~\cref{defn:curvature-index},~\cref{lem:curvature-basket} asserts that $\iota(\Basket_k) =  \frac{(-3)^k+4k-1}{8}$. The second ingredient to our construction is given below.

    \begin{lemma}\label{lem:second-ingredient}
        Let $G=(V, E)$ be a connected graph on $n$ vertices. Assume there exists $\x \in\mathbb{R}^{n}$ such that $\one^\top \x  = 1$ and $\distmatrix\x  = r\one $ for some $r\in\mathbb{R}$. Let $u^\ast\in V$ be a fixed vertex, and let $\widetilde{G}$ be the graph obtained by adding a single vertex $u_{n+1}$ and edge $\{u^\ast, u_{n+1}\}$. Define $\x'\in\mathbb{R}^{n+1}$ entrywise by setting
            \begin{align*}
                \x'_i &=\begin{cases}
                    \x_i - \frac{1}{2} &\text{ if }u_i = u^\ast\\
                    \frac{1}{2} &\text{ if }u_i = u_{n+1}\\
                    \x_i&\text{ otherwise}
                \end{cases},\quad  1\leq i \leq n+1.
            \end{align*}
        Let $\distmatrix'$ denote the distance matrix of $\widetilde{G}$. Then: \emph{(i)} $\one^\top\x' = 1$, and \emph{(ii)}, $\distmatrix'\x'  = (r + \frac{1}{2}) \one $.
    \end{lemma}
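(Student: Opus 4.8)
The plan is to reduce everything to a direct calculation, after first recording how the distance matrix changes when a pendant is attached. Write $p$ for the index with $u_p = u^\ast$, and let $\distmatrix'$ be the distance matrix of $\widetilde{G}$. Since $u_{n+1}$ is a leaf of $\widetilde{G}$, no geodesic between two vertices of $V$ can pass through it, so $\distmatrix'_{ij} = \distmatrix_{ij}$ for $1 \le i,j \le n$; and since every path leaving $u_{n+1}$ must go through $u^\ast$, we have $\distmatrix'_{i,n+1} = \distmatrix'_{n+1,i} = \distmatrix_{ip} + 1$ for $1 \le i \le n$ (in particular $\distmatrix'_{p,n+1} = 1$, using $\distmatrix_{pp}=0$), and $\distmatrix'_{n+1,n+1}=0$. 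This structural observation is the only real input; everything after it is bookkeeping.

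Part \emph{(i)} is immediate: passing from $\x$ to $\x'$ subtracts $\tfrac12$ at coordinate $p$ and adds $\tfrac12$ at coordinate $n+1$, so $\one^\top\x' = \one^\top\x = 1$. The same remark isolates two identities worth stating before tackling \emph{(ii)}: for every $1 \le k \le n$ one has $\sum_{j=1}^n \distmatrix_{kj}\x'_j = (\distmatrix\x)_k - \tfrac12\distmatrix_{kp} = r - \tfrac12\distmatrix_{kp}$, and, using $\one^\top\x = 1$ in $\mathbb{R}^n$, $\sum_{j=1}^n \x'_j = 1 - \tfrac12 = \tfrac12$.

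For part \emph{(ii)}, I would verify $(\distmatrix'\x')_k = r + \tfrac12$ in two cases. For $1 \le k \le n$, splitting off the $j = n+1$ term gives
\[
(\distmatrix'\x')_k = \sum_{j=1}^n \distmatrix_{kj}\x'_j + (\distmatrix_{kp}+1)\x'_{n+1} = \Bigl(r - \tfrac12\distmatrix_{kp}\Bigr) + \tfrac12(\distmatrix_{kp}+1) = r + \tfrac12 ,
\]
which also covers $k = p$, since there $\distmatrix_{kp} = \distmatrix_{pp} = 0$. For $k = n+1$,
\[
(\distmatrix'\x')_{n+1} = \sum_{j=1}^n (\distmatrix_{jp}+1)\x'_j = \sum_{j=1}^n \distmatrix_{pj}\x'_j + \sum_{j=1}^n \x'_j = \Bigl(r - \tfrac12\distmatrix_{pp}\Bigr) + \tfrac12 = r + \tfrac12 ,
\]
using symmetry of $\distmatrix$ and $\distmatrix_{pp} = 0$. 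Together these give $\distmatrix'\x' = (r+\tfrac12)\one$.

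I do not expect a genuine obstacle: the whole argument is a short computation. The one point requiring care is the coordinate $p$, which simultaneously carries the $-\tfrac12$ correction in $\x'$ and has vanishing diagonal distance entry; tracking both of these is exactly what makes the two cases collapse to the single value $r + \tfrac12$, and it is the only place an off-by-a-half slip could creep in.
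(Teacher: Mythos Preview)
Your proof is correct and follows essentially the same approach as the paper: both arguments record that attaching a pendant at $u^\ast$ leaves the principal $n\times n$ block of $\distmatrix'$ unchanged and adds a row/column of the form $\distmatrix_{\cdot,p}+\one$, then verify $\distmatrix'\x' = (r+\tfrac12)\one$ by direct computation. The only cosmetic difference is that the paper reorders so that $u^\ast$ is the first vertex and carries out the calculation in block-matrix form, whereas you keep a general index $p$ and compute entrywise; the underlying algebra is identical.
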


    \begin{proof}
        Claim \textit{(i)} is obvious. For claim \textit{(ii)}, we assume without loss of generality that $u^\ast = u_1$. We partition $\distmatrix$ into the following block matrix structure
            \begin{align*}
                \distmatrix &= \begin{bmatrix}
                    0 & \y^\top \\
                    \y  & \distmatrix_1
                \end{bmatrix},
            \end{align*}
        where $\y \in\mathbb{R}^{n-1}$ contains the distances between $u_1$ and the remaining vertices $V\setminus\{u_1\}$, and $\distmatrix_1\in\mathbb{R}^{(n-1)\times (n-1)}$ is the submatrix of $\distmatrix$ indexed by the vertices $V\setminus\{u_1\}$. By writing
            \begin{align*}
                {\x}'' = \begin{bmatrix}
                    x_2 & x_3 &\dotsc x_{n}
                \end{bmatrix}^\top,
            \end{align*}
        we thus have that
            \begin{align}\label{eq:dx}
                \distmatrix\x  &= \begin{bmatrix}
                    0 & \y^\top \\
                    \y  & \distmatrix_1
                \end{bmatrix}\begin{bmatrix}
                    x_1 \\ 
                    {\x}''
                \end{bmatrix} = \begin{bmatrix}
                    \y^\top {\x}'' \\
                    x_1 \y  + \distmatrix_1 {\x}''
                \end{bmatrix} = \begin{bmatrix}
                    r \\
                    r\one
                \end{bmatrix}.
            \end{align}
        By the assumption on $u_1$, we have that for each $v\in V\setminus\{u_1, u_{n+1}\}$, it holds
            \begin{align*}
                d(u_{n+1}, v) &= d(u_{1}, v) + 1.
            \end{align*}
        Therefore we may write the matrix $\distmatrix'$ in the form
            \begin{align*}
                \distmatrix' &= \begin{bmatrix}
                    0 & \y^\top & 1\\
                    \y  & \distmatrix_1 & \y +\one\\
                    1 & (\y  +\one)^\top & 0
                \end{bmatrix}.
            \end{align*}
        By \cref{eq:dx}, it holds
            \begin{align*}
                \distmatrix'\x' &= \begin{bmatrix}
                    0 & \y^\top & 1\\
                    \y  & \distmatrix_1 & \y +\one\\
                    1 & (\y  +\one)^\top & 0
                \end{bmatrix}\begin{bmatrix}
                    x_1 - \tfrac{1}{2}\\
                    {\x}'' \\
                    \tfrac{1}{2}
                \end{bmatrix} = \begin{bmatrix}
                    \y^\top \x'' + \tfrac{1}{2}\\
                    (x_1-\tfrac{1}{2}) \y  + \distmatrix_1\x'' + (\y +\one)\tfrac{1}{2}\\
                    (x_1-\tfrac{1}{2}) + (\y +\one)^\top \x''
                \end{bmatrix}\\
                &= \begin{bmatrix}
                    r + \tfrac{1}{2}\\
                    (r+\tfrac{1}{2})\one\\
                    \one^\top \x  - \frac{1}{2} + r
                \end{bmatrix} = (\tfrac{1}{2}+r)\one.
            \end{align*}
        Thus \emph{(ii)} is proved.
    \end{proof}

    Based on~\cref{lem:dne-positive-sum-kernel} and using~\cref{lem:curvature-basket} and~\cref{lem:second-ingredient},~\cref{thm:basket-jailbreak-intro} immediately follows.
    
    \begin{remark}\label{rmk:basket}
        \Cref{thm:basket-jailbreak-intro} illustrates the mechanism allowing graphs such as those in~\cref{obs:jailbroken-baskets} to emerge, demonstrating several degrees of freedom in their constructions despite the highly constrained nature of distance exceptionality. In fact, the constant $r\in\mathbb{R}$ appearing in~\cref{lem:second-ingredient} is the curvature index, and the process described is vertex coalescence with a copy of $\Complete_2$. Thus ~\cref{lem:second-ingredient} is a special case of \cref{thm:merging}, where $\iota(\Complete_2) = \frac{1}{2}$. It is not clear why the constant $r$ is exponentially large for the family of basket graphs in particular; we note that, empirically, such behavior seems unusual, and is not well understood at present.
    \end{remark}

    \section{The curvature index and graph operations}\label{sec:curvature-index}

    In this section, we formalize the curvature index for a connected graph, and examine its behavior under the graph operations of interest in this article: Cartesian product, vertex coalescence, and graph join. The following proposition, to which we alluded in the introduction, asserts a geometric constraint on the images of unit sum curvature potentials.
    
    \begin{proposition}\label{prop:index-of-curvature}
        Let $G$ be a connected graph on $n$ vertices. Assume either that there exists a curvature potential $\x\in\mathbb{R}^{n}$ of $G$ such that $\one^\top\x \ne 0$ or that $G$ is distance exceptional. Then the affine space
            \begin{align*}
                X(G) &:= \left\{\distmatrix\x \;:\; \x\in\mathbb{R}^{n},\, \x^\top\one = 1\right\}
            \end{align*}
        intersects the line $\mathbb{R}\one$ at exactly one point.
    \end{proposition}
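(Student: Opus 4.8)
The plan is to reduce the statement to two claims — that $X(G)\cap\mathbb{R}\one$ is nonempty, and that it contains at most one point — and to dispatch each by appealing to \cref{lem:dne-positive-sum-kernel}, which I will use in its contrapositive form: $\distmatrix\x=\one$ has a solution if and only if every vector in $\ker\distmatrix$ has zero coordinate sum.

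For nonemptiness I would split on the hypothesis. If $G$ is distance exceptional, \cref{lem:dne-positive-sum-kernel} produces $\vec{w}\in\ker\distmatrix$ with $\one^\top\vec{w}>0$; normalizing by $\one^\top\vec{w}$ yields a unit-sum vector whose image is $\zero\in\mathbb{R}\one$. If instead there is a curvature potential $\x^\ast$ with $m:=\one^\top\x^\ast\ne 0$, then $\x^\ast/m$ has unit coordinate sum and image $(1/m)\one$. In either case $X(G)$ meets $\mathbb{R}\one$.

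For uniqueness I would suppose $c_1\one,c_2\one\in X(G)$ with witnesses $\x_1,\x_2$ of unit coordinate sum, subtract them, and observe that $\distmatrix(\x_1-\x_2)=(c_1-c_2)\one$ while $\one^\top(\x_1-\x_2)=0$. If $c_1\ne c_2$, rescaling produces $\vec{z}$ with $\distmatrix\vec{z}=\one$ and $\one^\top\vec{z}=0$. When $G$ is distance exceptional this is immediately contradictory. Otherwise I would combine $\vec{z}$ with the distinguished curvature potential $\x^\ast$ from the hypothesis: the vector $\x^\ast-\vec{z}$ lies in $\ker\distmatrix$ and has coordinate sum $m\ne 0$, so after possibly negating it, \cref{lem:dne-positive-sum-kernel} forces $\distmatrix\x=\one$ to be unsolvable, contradicting the existence of $\x^\ast$. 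Hence $c_1=c_2$, and $X(G)\cap\mathbb{R}\one$ is a single point.

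The main thing to get right is not any computation but the bookkeeping in the case analysis: the hypothesis of the proposition (``a unit-sum curvature potential exists, \emph{or} $G$ is distance exceptional'') is exactly what rules out the two degeneracies — empty intersection, and all of $\mathbb{R}\one$ — so I would be careful that each branch uses precisely the piece of the hypothesis it needs, and that the sign of $m$ is handled when invoking \cref{lem:dne-positive-sum-kernel}. A secondary point worth a sentence is that $X(G)$ is genuinely an affine subspace, being the image under the linear map $\distmatrix$ of the hyperplane $\{\x:\x^\top\one=1\}$; this is what makes ``exactly one point'' the correct conclusion, since an affine subspace meets a line in either nothing, one point, or the entire line.
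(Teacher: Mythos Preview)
Your proof is correct. The nonemptiness argument matches the paper's exactly. For uniqueness, however, the paper takes a shorter and more general route: given $\x_1,\x_2$ with $\one^\top\x_i=1$ and $\distmatrix\x_i=\iota_i\one$, it simply computes $\x_2^\top\distmatrix\x_1=\iota_1(\one^\top\x_2)=\iota_1$ and, by symmetry of $\distmatrix$, the same quantity equals $\x_1^\top\distmatrix\x_2=\iota_2$. This two-line argument uses only the symmetry of $\distmatrix$ and the unit-sum constraint; in particular it never invokes the hypothesis of the proposition or \cref{lem:dne-positive-sum-kernel}, so it actually shows $|X(G)\cap\mathbb{R}\one|\le 1$ for \emph{every} connected graph. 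Your subtraction argument is sound but works harder: you re-enter the case split on the hypothesis, build a zero-sum curvature potential $\vec{z}$, and then manufacture a nonzero-sum kernel vector to reach a contradiction via \cref{lem:dne-positive-sum-kernel}. What your approach buys is that it stays entirely within the ``kernel/range'' picture set up by \cref{lem:dne-positive-sum-kernel}, which is thematically consistent; what the paper's approach buys is brevity and a cleaner separation of concerns (the hypothesis is needed only for existence, never for uniqueness).
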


    As in~\cref{defn:curvature-index}, if $G$ is such that the hypotheses of~\cref{prop:index-of-curvature} hold, then the unique value $\iota(G)\in\mathbb{R}$ for which $X(G) \cap \mathbb{R}\one = \iota(G)\one$ is called the \emph{curvature index} of $G$ (see~\cref{defn:curvature-index}). Otherwise if this is not the case, we set $\iota(G) := \infty$ and note that this convention will prove to be mathematically natural, as our statements concerning $\iota(G)$ will hold in this case using reasonable conventions for computations in the extended reals.

    \begin{proof}[Proof of~\cref{prop:index-of-curvature}]
        First we claim that this intersection is not empty. If $G$ is not distance exceptional then there is some $\x\in\mathbb{R}^{n}$ with $\x\neq 0$ such that $\distmatrix\x = \one$. By assumption, this $\x$ may be taken so that $\one^\top\x \neq 0$. We can set $\x_0 := \x / \one^\top\x$ so that $\x_0^\top\one = 1$ and $\distmatrix\x_0\in \mathbb{R}\one$. Otherwise, if $G$ is distance exceptional, by~\cref{lem:dne-positive-sum-kernel}, we may fix some $\y\in\mathbb{R}^{n}$ with $\y^\top\one  \neq0$ and $\distmatrix\y = 0\one\in\mathbb{R}\one$. Again by scaling we can assume $\one^\top \y = 1$. Thus $X(G)\cap\mathbb{R}\one \neq\varnothing$ holds. Next, assume that there are $\x_1,\x_2\in\mathbb{R}^{n}$ for which 
            \begin{align*}
                \distmatrix\x_i = \iota_i\one,\quad \iota_i\in\mathbb{R},\; i=1, 2.
            \end{align*}
        Then
            \begin{align}\label{eq:alpha-1}
                \x_2^\top \distmatrix\x_1 &= \x_2^\top \iota_1 \one = \iota_1,
            \end{align}
        but $\distmatrix$ is symmetric, so $\x_2^\top \distmatrix\x_1 = \x_1^\top \distmatrix\x_2$, and thus by taking transposes of~\cref{eq:alpha-1}, it follows that $\iota_1 = \iota_2$.
    \end{proof}

    The following lemma immediately highlights why this notion is useful for investigating distance exceptional graphs.

    \begin{lemma}\label{lem:beta-zero-is-dx}
        Let $G$ be a connected graph. Then $G$ is distance exceptional if and only if $\iota(G) = 0$.
    \end{lemma}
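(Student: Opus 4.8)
The plan is to derive the lemma almost directly from \cref{lem:dne-positive-sum-kernel} together with the definition of $\iota(G)$ in \cref{defn:curvature-index} and the uniqueness statement of \cref{prop:index-of-curvature}. The core observation is that membership of $\zero$ in $X(G)\cap\mathbb{R}\one$ is, up to rescaling, exactly the condition appearing in \cref{lem:dne-positive-sum-kernel}; everything else is bookkeeping with the definitions.

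\emph{Distance exceptional $\Rightarrow\iota(G)=0$.} First I would assume $G$ is distance exceptional. Then $G$ falls under the second clause of the hypothesis of \cref{prop:index-of-curvature}, so $\iota(G)$ is the unique real number with $X(G)\cap\mathbb{R}\one=\iota(G)\one$. By \cref{lem:dne-positive-sum-kernel} there is $\x$ with $\one^\top\x>0$ and $\distmatrix\x=\zero$; normalizing by $\y:=\x/(\one^\top\x)$ yields $\y^\top\one=1$ and $\distmatrix\y=\zero=0\cdot\one$, so $\zero\in X(G)\cap\mathbb{R}\one$. By the uniqueness asserted in \cref{prop:index-of-curvature} this forces $\iota(G)\one=\zero$, i.e.\ $\iota(G)=0$.

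\emph{$\iota(G)=0\Rightarrow$ distance exceptional.} Conversely, if $\iota(G)=0$ then in particular $\iota(G)$ is finite, so $G$ satisfies the hypotheses of \cref{prop:index-of-curvature} and $X(G)\cap\mathbb{R}\one=\{\zero\}$. Hence $\zero\in X(G)$, meaning there is $\x$ with $\x^\top\one=1$ and $\distmatrix\x=\zero$. Since $\one^\top\x=1>0$, \cref{lem:dne-positive-sum-kernel} gives that $\distmatrix\x=\one$ has no solution, i.e.\ $G$ is distance exceptional. As a consistency check one can also argue the contrapositive directly: if $G$ is not distance exceptional, then either it admits a curvature potential of nonzero mean — which after rescaling to unit sum shows $\one\in X(G)\cap\mathbb{R}\one$, so $\iota(G)=1\ne 0$ — or every curvature potential has zero mean and $\iota(G)=\infty\ne 0$ by convention.

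I do not expect a genuine obstacle here, since both implications are short. The one point requiring care is the extended-real convention in \cref{defn:curvature-index}: one must confirm that the value $\iota(G)=0$ can only be attained through the finite branch of the definition (equivalently, that a distance exceptional graph is never assigned $\iota(G)=\infty$), which is immediate from the way the hypotheses of \cref{prop:index-of-curvature} are phrased but should be stated explicitly so the biconditional is airtight.
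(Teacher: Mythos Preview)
Your main argument is correct and is exactly the ``trivial restatement of \cref{lem:dne-positive-sum-kernel}'' that the paper alludes to (the paper omits the proof entirely). One small slip in your optional consistency check: if $G$ admits a curvature potential $\x$ with $\distmatrix\x=\one$ and $\one^\top\x=c\ne 0$, then rescaling to unit sum gives $\distmatrix(\x/c)=(1/c)\one$, so $\iota(G)=1/c$, not $\iota(G)=1$; the conclusion $\iota(G)\ne 0$ still holds, so this does not affect correctness.
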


    The proof of~\cref{lem:beta-zero-is-dx} amounts to a trivial restatement of~\cref{lem:dne-positive-sum-kernel} and is omitted. The next proposition asserts that $\iota(G)$ may be distilled from $\distmatrix$ in at least two additional ways, and also highlights its connection to the Steinerberger curvature, after which it is named. If $A$ is any matrix, we write $A^+$ to denote its Moore-Penrose inverse (see~\cite{ben1963contributions}).

    \begin{proposition}\label{prop:representations-of-index}
        Let $G$ be a connected graph on $n$ vertices. Then the following statements hold concerning $\iota(G)$.
            \begin{enumerate}[label=\emph{(\roman*)}]
                \item If $G$ is not distance exceptional and $\kappa := \distmatrix^+ (n\one)$ is the Steinerberger curvature of $G$, then
                    \begin{align}\label{eq:beta-reciprocal}
                        \iota(G) = \frac{n}{\one^\top \kappa} \in \mathbb{R}\cup\{\infty\}.
                    \end{align}
                \item Let $\lambda_1\leq\lambda_2\leq\dotsc\leq\lambda_n$ denote the eigenvalues of $\distmatrix$, together with an orthonormal set of corresponding eigenvectors $\vec{u_1}, \vec{u_2},\dotsc, \vec{u_n}\in\mathbb{R}^n$. Then
                    \begin{align}\label{eq:beta-reciprocal2}
                        \iota(G)^{-1} &= \sum_{i=1}^{n} \frac{(\vec{u_i}^\top \one)^2}{\lambda_i} \in  \mathbb{R}\cup\{\infty\}.
                    \end{align}
                We interpret the right-hand side as an extended real-valued expression, according to the conventions $\frac1\infty =0$, $\frac10 = \infty$, and $\frac00= 0$.
            \end{enumerate}
    \end{proposition}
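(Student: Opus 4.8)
The plan is to carry out both computations in an orthonormal eigenbasis of $\distmatrix$, reducing everything to the distinguished curvature potential $\distmatrix^{+}\one$, and then to split into the cases where $G$ is or is not distance exceptional. Set $c_i := \vec{u_i}^{\top}\one$, so that $\sum_{i=1}^{n} c_i^{2} = \|\one\|^{2} = n$. From $\distmatrix = \sum_i \lambda_i\,\vec{u_i}\vec{u_i}^{\top}$ one gets $\distmatrix^{+} = \sum_{i:\lambda_i\neq 0}\lambda_i^{-1}\,\vec{u_i}\vec{u_i}^{\top}$, and hence
\[
    \distmatrix^{+}\one = \sum_{i:\lambda_i\neq 0}\frac{c_i}{\lambda_i}\,\vec{u_i}, \qquad \one^{\top}\distmatrix^{+}\one = \sum_{i:\lambda_i\neq 0}\frac{c_i^{2}}{\lambda_i}.
\]
The structural fact I would record first is that, since $\distmatrix$ is symmetric, $G$ is distance exceptional if and only if $\one\notin\mathrm{range}(\distmatrix) = \ker(\distmatrix)^{\perp}$, equivalently, if and only if $c_i\neq 0$ for some index $i$ with $\lambda_i = 0$.

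Next I would treat the case where $G$ is \emph{not} distance exceptional, which handles~\eqref{eq:beta-reciprocal} and the finite regime of~\eqref{eq:beta-reciprocal2}. Here $\one\perp\ker(\distmatrix)$, so $\distmatrix\distmatrix^{+}$ acts as the identity on $\one$ and $\x_{\ast} := \distmatrix^{+}\one$ is a curvature potential; every curvature potential has the form $\x_{\ast} + v$ with $v\in\ker(\distmatrix)$, and all of them share the common mean $m := \one^{\top}\x_{\ast} = \one^{\top}\distmatrix^{+}\one$. If $m\neq 0$, then $\x_{\ast}/m$ has unit mean and $\distmatrix(\x_{\ast}/m) = m^{-1}\one\in\mathbb{R}\one$, so $\iota(G) = m^{-1}$; if $m = 0$, every curvature potential has zero mean, so $\iota(G) = \infty$ by definition. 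In either case $\iota(G)^{-1} = m = \one^{\top}\distmatrix^{+}\one$; combining this with $\kappa = n\,\distmatrix^{+}\one$ gives $\one^{\top}\kappa = n/\iota(G)$, which is~\eqref{eq:beta-reciprocal} after rearranging (with $n/0 = \infty$ when $m = 0$). Moreover $c_i = 0$ whenever $\lambda_i = 0$ in this case, so the convention $0/0 = 0$ lets us extend the sum over all indices, giving $\sum_{i} c_i^{2}/\lambda_i = \iota(G)^{-1}$, which is~\eqref{eq:beta-reciprocal2}.

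Finally, when $G$ is distance exceptional,~\cref{lem:beta-zero-is-dx} gives $\iota(G) = 0$, and only~\eqref{eq:beta-reciprocal2} is asserted. By the structural fact above there is an index with $\lambda_i = 0$ and $c_i\neq 0$, which contributes a term $c_i^{2}/0 = +\infty$; every other term is a finite real (when $\lambda_i\neq 0$) or equals $0/0 = 0$ (when $\lambda_i = 0$, $c_i = 0$), so no competing $-\infty$ appears and the whole sum equals $+\infty = 0^{-1} = \iota(G)^{-1}$.

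Most of this is routine bookkeeping with the spectral decomposition. The step where I expect to need genuine care is the edge-case analysis: verifying the equivalence between $m = 0$ and every curvature potential having zero mean (which is precisely where $\one\perp\ker(\distmatrix)$ is used), and checking that the right-hand side of~\eqref{eq:beta-reciprocal2} never produces an indeterminate $\infty - \infty$, so that the stated conventions for the extended reals pin down a single value coinciding with $\iota(G)$ across all three regimes — $\iota(G)$ finite and nonzero, $\iota(G) = \infty$, and $\iota(G) = 0$.
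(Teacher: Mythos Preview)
Your argument is correct and follows essentially the same route as the paper: both proofs hinge on the identity $\iota(G)^{-1} = \one^{\top}\distmatrix^{+}\one$, expand $\distmatrix^{+}$ spectrally, and then handle the three regimes (finite nonzero, $\infty$, $0$) separately. Your organization around the single quantity $m = \one^{\top}\distmatrix^{+}\one$ is slightly more streamlined than the paper's, and your treatment of the distance-exceptional case---directly invoking $\one\notin\ker(\distmatrix)^{\perp}$ to find a zero eigenvalue with $c_i\neq 0$---is in fact a little cleaner than the paper's version, which argues via a kernel vector $\x$ with $\one^{\top}\x=1$.
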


    \begin{proof}
        We first prove statement \emph{(i)} as follows. Assume that $G$ is not distance exceptional and that $G$ admits a curvature potential with nonzero sum. Then there exists some $\x\in\mathbb{R}^{n}$ such that $\one^\top\x = 1$ and $\distmatrix\x = \iota(G)\one$. Therefore we have that
            \begin{align*}
                \one^\top\kappa &= n\one^\top (\distmatrix^+ \one) \\
                &= \frac{n}{\iota(G)}\one^\top (\distmatrix^+\distmatrix \x)
            \end{align*}
        But since $G$ is not distance exceptional, $\one\in\mathrm{range}(\distmatrix)$, so $\distmatrix^+\distmatrix\x =\x$, and thus
            \begin{align*}
                \frac{n}{\iota(G)}\one^\top (\distmatrix^+\distmatrix \x) &= \frac{n}{\iota(G)}\one^\top \x = \frac{n}{\iota(G)},
            \end{align*}
        so that~\cref{eq:beta-reciprocal} holds. Next, assume that $G$ is not distance exceptional and that all curvature potentials of $G$ have zero mean. Since $G$ is not distance exceptional, $\kappa = \distmatrix^+(n\one)$ is the unique vector of minimal Euclidean length such that $\distmatrix\kappa = n\one$. In particular, $\one\in\mathrm{range}(\distmatrix)$. But since $\one\notin D[\mathbb{R}^{n}\setminus\one^\perp]$, it must hold that $\one^\top\kappa = 0$. Thus~\cref{eq:beta-reciprocal} holds as an extended real number. Next we consider statement \emph{(ii)}. First assume that $G$ is not distance exceptional and that $G$ admits a curvature potential with nonzero sum. Then there exists $\x\in\mathbb{R}^{n}$ such that $\one^\top\x = 1$ and $\distmatrix\x = \iota(G)\one$. Since $\one\in\mathrm{range}(\distmatrix)$, $\one\perp\ker\distmatrix$, and thus it holds $\one^\top \x = \one^\top \distmatrix^+(\iota\one)$. Then we have
            \begin{align*}
                1 &= \one^\top\x = \iota(G)\one^\top (\distmatrix^+\one) = \iota(G) \sum_{i=1}^{n} \frac{(\vec{u_i}^\top \one)^2}{\lambda_i},
            \end{align*}
        where the right-hand side is finite (via the conventions of statement \emph{(ii)}) since $\mathrm{ker}(\distmatrix)\perp\mathbb{R}\one$. Therefore~\cref{eq:beta-reciprocal2} holds. In the case that $G$ is not distance exceptional and all curvature potentials of $G$ have zero mean, then there exists $\x\neq 0 $ such that $\distmatrix\x = \one$, but for any such $\x$, in particular $\distmatrix^+\one$, it holds $\one^\top\x = 0$. In turn,
            \begin{align*}
                \sum_{i=1}^{n} \frac{(\vec{u_i}^\top \one)^2}{\lambda_i} &= \one^\top (\distmatrix^+\one) = \one^\top\x = 0,
            \end{align*}
        from which~\cref{eq:beta-reciprocal2} holds using the convention $\iota(G)^{-1} = 0$. Finally, if $G$ is distance exceptional, then there exists some $\x\in\mathbb{R}^{n}$ with $\one^\top\x = 1$ and $\distmatrix\x = \vec{0}$. In particular, $\x$ belongs to the eigenspace of $\distmatrix$ with eigenvalue $\lambda=0$, which is nontrivial, and thus for at least one such eigenvector, say $\vec{u_1}$, it holds that $\vec{u_1}^\top\x \neq 0$. By convention, the right-hand side of~\cref{eq:beta-reciprocal2} evaluates to $\infty$, and the claim follows.
    \end{proof}

    We remark that graphs for which $\iota(G) = \infty$ do exist. For a minimal example, we may take $G$ to be one of the graph joins $\Complete_3 + (3\Complete_1)$ or $\Complete_2 + (4\Complete_1)$, where $n\Complete_1$ consists of $n$ disjoint and isolated vertices. Then one may verify that $\distmatrix$ is nonsingular and the image $\distmatrix^{-1}\one$ has mean zero so that indeed $\iota(G) = \infty$.

    In the remainder of this section, we provide a set of results which show that $\iota(G)$ behaves in a relatively friendly manner under graph operations, allowing us to compute its value in a number of situations.

    \begin{theorem}[Cartesian products]\label{thm:index-product}
        Let $G, H$ be connected graphs. Then $\iota(G\,\Box\,H) = \iota(G) + \iota(H)$, with $\iota(G\,\Box\,H) = \infty$ if either $\iota(G) = \infty$ or $\iota(H) = \infty$.
    \end{theorem}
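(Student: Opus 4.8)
The plan is to work directly with the characterization of the curvature index via curvature potentials. Recall that for connected graphs $G$ (on vertex set $V_G$, $|V_G| = p$) and $H$ (on $V_H$, $|V_H| = q$), the distance matrix of $G\,\Box\,H$ is $\distmatrix_{G\Box H} = \distmatrix_G \otimes J_q + J_p \otimes \distmatrix_H$, where $J_k$ denotes the $k\times k$ all-ones matrix and the indexing is by pairs $(u,v)\in V_G\times V_H$. This is the key structural identity: it expresses the distance matrix of the product as a sum of two Kronecker-type terms, and it is the source of the additivity. First I would record this identity (it follows from $d_{G\Box H}((u,v),(u',v')) = d_G(u,u') + d_H(v,v')$).

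Next, assume $\iota(G) = a < \infty$ and $\iota(H) = b < \infty$, so there exist $\x\in\mathbb{R}^{p}$ and $\y\in\mathbb{R}^{q}$ with $\one^\top\x = \one^\top\y = 1$, $\distmatrix_G\x = a\one$, and $\distmatrix_H\y = b\one$. The natural candidate potential on the product is $\z := \x\otimes\y\in\mathbb{R}^{pq}$, which satisfies $\one^\top\z = (\one^\top\x)(\one^\top\y) = 1$. Then I compute
\begin{align*}
    \distmatrix_{G\Box H}\,\z &= (\distmatrix_G\otimes J_q)(\x\otimes\y) + (J_p\otimes\distmatrix_H)(\x\otimes\y)\\
    &= (\distmatrix_G\x)\otimes(J_q\y) + (J_p\x)\otimes(\distmatrix_H\y)\\
    &= (a\one)\otimes(\one^\top\y\,\one) + (\one^\top\x\,\one)\otimes(b\one)\\
    &= (a+b)\,(\one\otimes\one) = (a+b)\one,
\end{align*}
using $J_q\y = (\one^\top\y)\one = \one$ and $J_p\x = \one$. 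Hence $(a+b)\one\in X(G\,\Box\,H)\cap\mathbb{R}\one$, and by~\cref{prop:index-of-curvature} this intersection is a single point, so $\iota(G\,\Box\,H) = a+b$. (One should note that the hypotheses of~\cref{prop:index-of-curvature} are met for the product: either $\z$ is a curvature potential with nonzero sum — when $a+b\neq 0$ — or else $a+b=0$, $\z$ is a kernel vector of positive sum, so $G\,\Box\,H$ is distance exceptional and the proposition still applies.)

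For the infinite case, suppose $\iota(H) = \infty$; I must show $\iota(G\,\Box\,H) = \infty$, i.e., that every curvature potential of $G\,\Box\,H$ has zero mean. This is where I expect the main obstacle. The clean approach is contrapositive: suppose $\z\in\mathbb{R}^{pq}$ satisfies $\distmatrix_{G\Box H}\z = c\one$ with $\one^\top\z = 1$ and $c = \iota(G\,\Box\,H)$; I want to derive a finite-mean curvature potential for $H$ (and one for $G$). The idea is to exploit the block structure by "averaging out" the $G$-coordinates. Writing $\z$ as a $p\times q$ array $(z_{uv})$, set $\y^{(u)} := (z_{uv})_{v\in V_H}\in\mathbb{R}^q$ for each $u$, and $\bar\y := \sum_u \y^{(u)}$, the column sums. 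Reading off the identity $\distmatrix_{G\Box H}\z = c\one$ blockwise gives, for each $u$,
\begin{align*}
    \sum_{u'} \distmatrix_G(u,u')\,(\one^\top\y^{(u')})\,\one + \distmatrix_H\,\bar\y &= c\,\one.
\end{align*}
Summing this over $u\in V_G$ yields $\left(\one^\top\distmatrix_G\one\right) \cdot (\text{something}) \cdot \one + p\,\distmatrix_H\bar\y = cp\,\one$ — more precisely, letting $w_{u'} := \one^\top\y^{(u')}$ and $\sigma := \sum_{u'} w_{u'} = \one^\top\z = 1$, the first term summed over $u$ becomes $\big(\sum_{u,u'}\distmatrix_G(u,u')w_{u'}\big)\one$, a scalar multiple of $\one$. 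So $\distmatrix_H\bar\y\in\mathbb{R}\one$, and $\one^\top\bar\y = \sigma = 1\neq 0$; thus $\bar\y$ is a curvature potential of $H$ (after checking $\distmatrix_H\bar\y = b'\one$ for some finite $b'$, which is automatic) with nonzero mean — contradicting $\iota(H) = \infty$. Hence no such $\z$ exists, so $\iota(G\,\Box\,H) = \infty$. By symmetry the same argument covers $\iota(G) = \infty$.

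The subtle point I will need to handle carefully is the bookkeeping in the blockwise expansion and the case where both $\iota(G)$ and $\iota(H)$ are infinite — but the averaging argument above only needs one of them to be infinite to force the contradiction, so it covers all cases uniformly. I would also double-check the degenerate possibility that $\bar\y = \vec 0$; since $\one^\top\bar\y = 1$ this cannot happen, so the argument is clean.
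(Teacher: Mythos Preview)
Your proposal is correct and follows essentially the same approach as the paper: the tensor-product potential $\x\otimes\y$ for the finite case, and the marginal (row/column-sum) argument for the infinite case. Your contrapositive treatment of the infinite case is in fact slightly more streamlined than the paper's---the paper separately verifies that $G\,\Box\,H$ is not distance exceptional and then that every curvature potential has zero mean, whereas your single argument (any $\z$ with $\one^\top\z=1$ and $\distmatrix_{G\Box H}\z\in\mathbb{R}\one$ would produce, via $\bar\y$, a witness to $\iota(H)<\infty$) handles both at once---but the underlying idea is identical.
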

    
    \begin{proof}
        First assume that $\iota(G), \iota(H) < \infty$. Let $\x_G\in\mathbb{R}^{|V(G)|}$ and $\x_H\in\mathbb{R}^{|V(H)|}$ be such that $\one^\top\x_G = 1$, $\distmatrix(G)\x_G = \iota(G)\one$, and similarly for $H$. Define $\x_{{G\,\Box\, H}} := \x_G \otimes \x_H \in \mathbb{R}^{|V(G)||V(H)|}$ where $\otimes$ denotes the Kronecker product of matrices. Then $\one^\top\x_{{G\,\Box\, H}} = 1$, and by properties of the distance matrix under Cartesian products (see~\cite{bapat2019onCartesian}), it holds that
            \begin{align*}
                \distmatrix({G\,\Box\, H})\x_{{G\,\Box\, H}} &= (\distmatrix(G)\otimes J + J \otimes \distmatrix(H))(\x_G \otimes \x_H)\\
                &= (\distmatrix(G)\x_G) \otimes (J\x_H) + (J\x_G) \otimes (\distmatrix(H)\x_H)\\
                &= \iota(G)\one \otimes \one + \one \otimes \iota(H)\one\\
                &= (\iota(G) + \iota(H))\one.
            \end{align*}
        Next assume, without loss of generality, that $\iota(G) = \infty$. If $\iota(H)\ne 0$, then by the preceding argument, $G\,\Box\,H$ is not distance exceptional and $\iota(G\,\Box\, H) \ne 0$. Otherwise, assuming $\iota(H) = 0$, we claim that $\iota(G\,\Box\, H) \ne 0$. To this extent, let $\y$ be such that $\distmatrix({G\,\Box\, H})\y = 0$. Write
            \begin{align*}
                (\y_G)_{u} &= \sum_{v\in V(H)} \y_{(u,v)},\quad u\in V(G),\\
                (\y_H)_{v} &= \sum_{u\in V(G)} \y_{(u,v)},\quad v\in V(H).
            \end{align*}
        Then
            \begin{align*}
                (\distmatrix({G\,\Box\, H})\y)_{u, v} &= \sum_{u'\in V(G)} d_G(u, u') (\y_G)_{u'} + \sum_{v'\in V(H)} d_H(v, v') (\y_H)_{v'}\\
                &= (\distmatrix(G) \y_G)_{u} + (\distmatrix(H) \y_H)_{v}.
            \end{align*}
        Thus it must hold that $\distmatrix(G) \y_G = c\one$ and $\distmatrix(H) \y_H = -c\one$ for some $c\in\mathbb{R}$. Regardless of $c$, since $\iota(G) = \infty$, it must hold that $\one^\top \y_G = \vec{0}$. Therefore
            \begin{align*}
                \one^\top \y &= \sum_{u\in V(G)} (\y_G)_u = 0,
            \end{align*}
        i.e., by~\cref{lem:dne-positive-sum-kernel}, $\iota(G\,\Box\,H)\ne 0$, i.e., $G\,\Box\,H$ admits a curvature potential. Finally, we claim that $\iota(G\,\Box\,H)= \infty$. To see this, suppose $\distmatrix({G\,\Box\,H}) \y = \one$, and let $\y_G, \y_H$ be as before. Then
            \begin{align*}
                1 &= (\distmatrix({G\,\Box\, H})\y)_{u, v} = (\distmatrix(G) \y_G)_{u} + (\distmatrix(H) \y_H)_{v},
            \end{align*}
        so that $\distmatrix(G) \y_G \propto \one$. Since $\iota(G) = \infty$, it must hold that $\one^\top \y_G = 0$, and thus
            \begin{align*}
                \one^\top \y &= \sum_{u\in V(G)} (\y_G)_u = 0.
            \end{align*}
        This completes the proof.
    \end{proof}

    We remark that~\cref{thm:index-product} recovers and generalizes the result of~\cite[Thm. 6]{dudarov2023image} which considered only the case $\iota(G) = \iota(H) = 0$. The next result concerns the procedure of connecting two graphs $G, H$ at a vertex, which was investigated in~\cite{chen2023steinerberger}.

    \begin{theorem}[Vertex coalescence]\label{thm:merging}
        Let $G, H$ be connected graphs. Let $u_{0}\in V(G)$ and $v_{0}\in V(H)$ be fixed vertices, and let $G\,\Circ_{u_{0},v_{0}}\,H$ denote the vertex coalescence of $G$ and $H$ at $u_{0}$ and $v_{0}$, respectively. Then
            \begin{align*}
                \iota(G\,\Circ_{u_{0},v_{0}}\,H) &= \iota(G) + \iota(H).
            \end{align*}
        If either $\iota(G) = \infty$ or $\iota(H) = \infty$, then $\iota(G\,\Circ_{u_{0},v_{0}}\,H)=\infty$.
    \end{theorem}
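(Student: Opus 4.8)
The plan is to exploit the cut-vertex structure of the coalescence. Write $D_G,D_H$ for the distance matrices of $G,H$ and order their rows and columns so that $u_0,v_0$ come first, giving
\[
D_G = \begin{bmatrix} 0 & \vec a^\top \\ \vec a & D_G' \end{bmatrix}, \qquad
D_H = \begin{bmatrix} 0 & \vec b^\top \\ \vec b & D_H' \end{bmatrix},
\]
where $\vec a\in\mathbb{R}^{|V(G)|-1}$ (resp.\ $\vec b$) records the distances from $u_0$ (resp.\ $v_0$) to the remaining vertices. In $\widehat G:=G\,\Circ_{u_0,v_0}\,H$ the merged vertex $w$ is a cut vertex, so every path between $V(G)\setminus\{u_0\}$ and $V(H)\setminus\{v_0\}$ passes through $w$; hence, ordering the vertices of $\widehat G$ as $w$, then $V(G)\setminus\{u_0\}$, then $V(H)\setminus\{v_0\}$, its distance matrix is
\[
\widehat{D} = \begin{bmatrix}
0 & \vec a^\top & \vec b^\top \\
\vec a & D_G' & \vec a\one^\top + \one\vec b^\top \\
\vec b & \one\vec a^\top + \vec b\one^\top & D_H'
\end{bmatrix}.
\]
The only structural input is this block form; its one nontrivial entry is the cross block $d(x,y)=d_G(x,u_0)+d_H(v_0,y)$ for $x\in V(G)\setminus\{u_0\}$, $y\in V(H)\setminus\{v_0\}$.

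Assume first $\iota(G),\iota(H)<\infty$ and pick $\x_G=(p,\x_G')$, $\x_H=(q,\x_H')$ with $\one^\top\x_G=\one^\top\x_H=1$ and $D_G\x_G=\iota(G)\one$, $D_H\x_H=\iota(H)\one$. I would set $\widehat{\x}:=(s,\x_G',\x_H')$ with $s:=p+q-1$, chosen exactly so that $\one^\top\widehat{\x}=1$. Reading $\vec a^\top\x_G'=\iota(G)$ and $D_G'\x_G'=\iota(G)\one-p\vec a$ (and their $H$-analogues) off of $D_G\x_G=\iota(G)\one$, a block multiplication shows that every block of $\widehat{D}\widehat{\x}$ equals $(\iota(G)+\iota(H))\one$ — the value of $s$ is precisely what annihilates the leftover multiples of $\vec a$ and $\vec b$. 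Thus $(\iota(G)+\iota(H))\one\in X(\widehat G)\cap\mathbb{R}\one$; moreover, if $\iota(G)+\iota(H)\ne0$, rescaling $\widehat{\x}$ yields a curvature potential of nonzero sum, and if $\iota(G)+\iota(H)=0$, then $\widehat{\x}$ is a kernel vector of $\widehat D$ of positive sum, so $\widehat G$ is distance exceptional by \cref{lem:dne-positive-sum-kernel}. In either case the hypotheses of \cref{prop:index-of-curvature} are met, so $\iota(\widehat G)=\iota(G)+\iota(H)$. (This is the general form of \cref{lem:second-ingredient}, which is the case $H=\Complete_2$, $\iota(\Complete_2)=\tfrac12$.)

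Now assume, say, $\iota(G)=\infty$; in particular $G$ is not distance exceptional. I claim that every $\widehat{\x}=(s,\y_G,\y_H)$ with $\widehat D\widehat{\x}=c\one$ for some $c\in\mathbb{R}$ satisfies $\one^\top\widehat{\x}=0$. Expanding the middle block and abbreviating $t_H:=\one^\top\y_H$, $\alpha:=\vec a^\top\y_G$, one obtains $D_G'\y_G=\alpha\one-(s+t_H)\vec a$, which says exactly that $\x_G^\ast:=(s+t_H,\y_G)\in\mathbb{R}^{|V(G)|}$ has $D_G\x_G^\ast=\alpha\one$. If $\alpha\ne0$ then $\x_G^\ast/\alpha$ is a curvature potential of $G$ and hence has zero mean since $\iota(G)=\infty$; if $\alpha=0$ then $\x_G^\ast\in\ker D_G$, which is orthogonal to $\one$ because $G$ is not distance exceptional (so $\one\in\operatorname{range}(D_G)=\ker(D_G)^\perp$, using symmetry of $D_G$). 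Either way $\one^\top\x_G^\ast=0$, i.e.\ $\one^\top\widehat{\x}=(s+t_H)+\one^\top\y_G=0$, proving the claim. Applying it with $c=0$ and \cref{lem:dne-positive-sum-kernel} shows $\widehat G$ is not distance exceptional; applying it with $c=1$ shows every curvature potential of $\widehat G$ has zero mean; together these give $\iota(\widehat G)=\infty$. The argument is symmetric in $G$ and $H$, so the same conclusion holds when $\iota(H)=\infty$.

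The routine part is the finite case, which simply extends the computation already performed in the proof of \cref{lem:second-ingredient}. The main obstacle is the $\infty$ case: one must carefully project a hypothetical curvature potential or kernel vector of $\widehat G$ down to a vector supported on $V(G)$ whose $D_G$-image is proportional to $\one$, and then remember to treat the degenerate subcase $\alpha=\vec a^\top\y_G=0$ on its own — there the conclusion rests not on curvature potentials of $G$ being zero-mean but on the weaker fact that $\iota(G)=\infty$ forces $G$ to be non-distance-exceptional, whence $\ker D_G\perp\one$.
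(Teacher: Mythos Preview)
Your proof is correct. The finite case is identical to the paper's: both build $\widehat{\x}=(p+q-1,\x_G',\x_H')$ from unit-sum witnesses on $G$ and $H$ and verify $\widehat D\widehat{\x}=(\iota(G)+\iota(H))\one$ via the same block decomposition of $\widehat D$. You are slightly more explicit than the paper in checking that the hypotheses of \cref{prop:index-of-curvature} are satisfied (splitting on whether $\iota(G)+\iota(H)$ vanishes), which is a point the paper leaves implicit.

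For the $\infty$ case you take a genuinely different route. The paper \emph{lifts}: it pads a zero-mean curvature potential of $G$ with zeros on the $H$-side to obtain an explicit $\widehat{\x}$ with $\widehat D\widehat{\x}=\one$ and $\one^\top\widehat{\x}=0$, and then uses $\one\in\operatorname{range}\widehat D$ to transfer zero-mean to all curvature potentials of $\widehat G$. You \emph{project}: starting from an arbitrary $\widehat{\x}$ with $\widehat D\widehat{\x}=c\one$, you read off from the first and middle blocks that $\x_G^\ast=(s+t_H,\y_G)$ satisfies $D_G\x_G^\ast=\alpha\one$, and then invoke $\iota(G)=\infty$ (treating $\alpha=0$ separately via $\ker D_G\perp\one$) to force $\one^\top\x_G^\ast=\one^\top\widehat{\x}=0$. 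Your approach has the advantage of handling $c=0$ and $c=1$ in one stroke, so that non-distance-exceptionality of $\widehat G$ and the zero-mean property of its curvature potentials fall out simultaneously; the paper's approach is a touch more concrete in that it exhibits the witness explicitly. One small note: your sentence ``expanding the middle block'' silently also uses the top block $\vec a^\top\y_G+\vec b^\top\y_H=c$ to identify the constant in front of $\one$ as $\alpha=\vec a^\top\y_G$; this is correct but worth making explicit.
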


    \begin{proof}
        Fix an enumeration of $V(G)$ and $V(H)$ such that $u_0, v_0$ appear first. Write $G\,\Circ\, H := G\,\Circ_{u_{0},v_{0}}\,H$ for brevity. Assume that $\iota(G),\iota(H)<\infty$. Then there exist vectors $\x_G,\x_H$ with $\one^\top\x_G = \one^\top\x_H = 1$ and $\distmatrix(G)\x_G = \iota(G)\one$, $\distmatrix(H)\x_H = \iota(H)\one$. Write
            \begin{align*}
                \x_G^\top &= \begin{bmatrix}
                    \alpha_G & \y_G^\top
                \end{bmatrix},\quad \x_H^\top = \begin{bmatrix}
                    \alpha_H & \y_H^\top
                \end{bmatrix}, \quad \alpha_G,\alpha_H\in\mathbb{R}.
            \end{align*}
        Order the vertices of $G\,\Circ\,H$ as
            \begin{align*}
                V(G\,\Circ\,H) = \{w\} \cup (V(G)\setminus \{u_{0}\} )\cup (V(H)\setminus \{v_{0}\}),
            \end{align*}
        where $w\in V(G\,\Circ\,H)$ is the vertex appearing after $u_0, v_0$ are identified. Define $s = |V(G)| - 1$, $t = |V(H)| - 1$. Denote by ${\distmatrix(G)}'$ (resp. ${\distmatrix(H)}'$) the principal submatrix of ${\distmatrix(G)}$ (resp. ${\distmatrix(H)}$) indexed by $V(G)\setminus \{u_{0}\}$ (resp. $V(H)\setminus \{v_{0}\}$), obtained by deleting the row and column corresponding to $u_{0}$ (resp. $v_{0}$). Let $\x_{1}\in\mathbb{R}^{s}$ denote the vector of distances between $u_{0}$ and $V(G)\setminus \{u_{0}\}$, and let $\y_{1}\in\mathbb{R}^{t}$ denote the vector of distances between $v_{0}$ and $V(H)\setminus \{v_{0}\}$. With this setup, the distance matrix $\distmatrix = \distmatrix(G\,\Circ\,H)$ takes the form
            \begin{align}\label{eq:d-decomp}
                \distmatrix = \begin{bmatrix}
                    0 & \x_{1}^\top & \y_{1}^\top\\
                    \x_{1} & {\distmatrix(G)}' & \x_{1}\one^\top + \one \y_{1}^\top\\
                    \y_{1} & \one \x_{1}^\top + \y_{1}\one^\top & {\distmatrix(H)}'
                \end{bmatrix}.
            \end{align}
        since for $u\in V(G)\setminus\{u_{0}\}$ and $v\in V(H)\setminus\{v_{0}\}$ one has
        $d_{G\,\Circ\,H}(u, v)=d_G(u,u_{0})+d_H(v_{0},v)$. Now let $\x\in\mathbb{R}^{1+s+t}$ be the vector given by
            \begin{align}\label{eq:xw}
                \x &= \begin{bmatrix}
                    x_w\\ \y_G \\ \y_H
                \end{bmatrix}, \quad x_w\in\mathbb{R},\, y_G\in\mathbb{R}^{s},\,y_H\in\mathbb{R}^{t}.
            \end{align}
        To guarantee that $\one^\top x = 1$, we put
            \begin{align*}
                x_w = 1-\one^\top \y_G - \one^\top \y_H = 1 - (1-\alpha_G) - (1-\alpha_H) = \alpha_G + \alpha_H - 1.
            \end{align*}
        Then we have that
            \begin{align*}
                \distmatrix \x &= \begin{bmatrix}
                    0 & \x_{1}^\top & \y_{1}^\top\\
                    \x_{1} & {\distmatrix(G)}' & \x_{1}\one^\top + \one \y_{1}^\top\\
                    \y_{1} & \one \x_{1}^\top + \y_{1}\one^\top & {\distmatrix(H)}'
                \end{bmatrix}\begin{bmatrix}
                    x_w\\ \y_G \\ \y_H
                \end{bmatrix}\\
                &= \begin{bmatrix}
                    \x_{1}^\top \y_G + \y_{1}^\top \y_H\\
                    \x_{1} x_w + {\distmatrix(G)}' \y_G + \x_1\one^\top\y_H + \one \y_{1}^\top \y_H\\
                    \y_{1} x_w + \one \x_{1}^\top \y_G + \y_1\one^\top \y_G + {\distmatrix(H)}' \y_H
                \end{bmatrix}\\
                &= \begin{bmatrix}
                    \iota(G) + \iota(H)\\
                    \x_1\,x_w + \bigl(\iota(G)\one - \alpha_G\x_1\bigr) + \x_1(\one^\top\y_H) + \one(\y_1^\top\y_H)\\
                    \y_1\,x_w + \one(\x_1^\top\y_G) + \y_1(\one^\top\y_G) + \bigl(\iota(H)\one - \alpha_H\y_1\bigr)\\
                \end{bmatrix}.
            \end{align*}
        since, by construction, $\alpha_G \x_1 + {\distmatrix(G)}' \y_G = \iota(G)\one$, and similarly $\alpha_H \y_1 + {\distmatrix(H)}' \y_H = \iota(H)\one$. Continuing, we have
            \begin{align*}
                \begin{bmatrix}
                    \iota(G) + \iota(H)\\
                    \x_1\,x_w + \bigl(\iota(G)\one - \alpha_G\x_1\bigr) + \x_1(\one^\top\y_H) + \one(\y_1^\top\y_H)\\
                    \y_1\,x_w + \one(\x_1^\top\y_G) + \y_1(\one^\top\y_G) + \bigl(\iota(H)\one - \alpha_H\y_1\bigr)\\
                \end{bmatrix} &= \begin{bmatrix}
                    \iota(G) + \iota(H)\\
                    \bigl(\iota(G)+\iota(H)\bigr)\one \;+\; \x_1\,(x_w + \one^\top\y_H - \alpha_G)\\
                    \bigl(\iota(G)+\iota(H)\bigr)\one \;+\; \y_1\,(x_w + \one^\top\y_G - \alpha_H)\\
                \end{bmatrix}\\
                &= (\iota(G) + \iota(H))\one,
            \end{align*}
        as claimed, since
            \begin{align*}
                x_w + \one^\top\y_H - \alpha_G &= (\alpha_G + \alpha_H - 1) + (1 - \alpha_H) - \alpha_G = 0,
            \end{align*}
        and similarly for the other term. This proves the first part of the theorem.

        Next, we assume without loss of generality that $\iota(G) = \infty$. In this case, we claim that $\iota(G\,\Circ\,H) = \infty$. By assumption there exists some $\y_G^\top = [\alpha\; \y_G'^\top]$ such that $\distmatrix(G)\y_G = \one$ and $\one^\top \y_G = \alpha + \one^\top \y_G' = 0$.  Define
            \begin{align*}
                \x &= \begin{bmatrix}
                    \alpha \\ \y_G' \\ \vec{0}
                \end{bmatrix} \in\mathbb{R}^{V(G\,\Circ\,H)}.
            \end{align*}
        Then, re-using the block decomposition~\cref{eq:d-decomp}, we have
            \begin{align*}
                \distmatrix \x &= \begin{bmatrix}
                    0 & \x_{1}^\top & \y_{1}^\top\\
                    \x_{1} & {\distmatrix(G)}' & \x_{1}\one^\top + \one \y_{1}^\top\\
                    \y_{1} & \one \x_{1}^\top + \y_{1}\one^\top & {\distmatrix(H)}'
                \end{bmatrix}\begin{bmatrix}
                    \alpha \\ \y_G' \\ \vec{0}
                \end{bmatrix} &= \begin{bmatrix}
                    \x_{1}^\top \y_G'\\
                    \x_{1} \alpha + {\distmatrix(G)}' \y_G'\\
                    \y_{1} \alpha + \one \x_{1}^\top \y_G' + \y_1\one^\top\y_G'
                \end{bmatrix} = \begin{bmatrix}
                    1\\
                    \one\\
                    \one
                \end{bmatrix},
            \end{align*}
        since $\x_1 \alpha + {\distmatrix(G)}' \y_G' = \one$ by construction, and since
            \begin{align*}
                \y_{1} \alpha + \one \x_{1}^\top \y_G' + \y_1\one^\top\y_G' &= \y_{1} \alpha + \one + \y_1(-\alpha) = \one.
            \end{align*}
        But then $\one^\top \x = \alpha + \one^\top \y_G' + 0 = 0$. Thus we have shown that there exists some $\x$ with $\distmatrix(G\,\Circ\,H)\x = \one$ and $\one^\top\x = 0$. Let $\y\in\mathbb{R}^{V(G\,\Circ\,H)}$ be arbitrary such that $\distmatrix(G\,\Circ\,H)\y = \one$. Then
            \begin{align*}
                \distmatrix(G\,\Circ\,H)(\y - \x) = \vec{0},
            \end{align*}
        and thus $\y-\x \in \mathrm{ker}(\distmatrix(G\,\Circ\,H))$. Since $\distmatrix(G\,\Circ\,H)$ is symmetric and $\one\in\mathrm{range}(\distmatrix(G\,\Circ\,H))$, it holds that $\mathrm{ker}(\distmatrix(G\,\Circ\,H)) \perp \one$. Therefore
            \begin{align*}
                \one^\top \y &= \one^\top \x + \one^\top (\y - \x) = 0 + 0 = 0,
            \end{align*}
        from which it follows that $\iota(G\,\Circ\,H) = \infty$.
    \end{proof}

    We remark that~\cref{thm:merging} recovers and generalizes that of~\cite{chen2023steinerberger}, which considers the case $\iota(G) = \iota(H) = 0$. Finally, to conclude this section, we consider the case of graph joins. To do so properly, we introduce a bit of notation. Letting $G$ be a (possibly disconnected) graph on $n$ vertices, begin by writing $\widetilde{D}(G)\in\mathbb{R}^{n\times n}$ to denote the principal submatrix of the distance matrix of the cone $G+\Complete_1$ of $G$, i.e., if $G$ is connected, we set
            \begin{align*}
                \widetilde{D}(G) &= \min\{\distmatrix(G), 2J\}
            \end{align*}
    where $J$ is the all-ones matrix. Let $\widetilde{\iota}(G)\in\mathbb{R}$ be defined as the unique scalar for which $\widetilde{D}(G)\x = \widetilde{\iota}(G)\one$ among those $\x$ with $\one^\top \x = 1$, with the same conventions at $\infty$ for the curvature index $\iota(\cdot)$. We refer to $\widetilde{\iota}(G)$ as the \emph{modified curvature index}, and its uniqueness follows from the symmetry of $\widetilde{D}(G)$ exactly as in the proof of~\cref{prop:index-of-curvature}.

    \begin{remark}\label{rmk:diam2-modified-index}
        If $\mathrm{diam}(G)\le 2$, then $\widetilde{\iota}(G) = \iota(G)$.
    \end{remark}

    \begin{theorem}[Graph joins]\label{thm:index-join}
        Let $G, H$ be graphs, connected or otherwise. Then 
            {\footnotesize\begin{align}\label{eq:beta-join}
                \iota(G+ H) &= \begin{cases}
                    \dfrac{\widetilde{\iota}(G) \widetilde{\iota}(H)-1}{\,\widetilde{\iota}(G)+\widetilde{\iota}(H)-2\,}, & \widetilde{\iota}(G),\widetilde{\iota}(H)<\infty\ \text{and}\ \widetilde{\iota}(G)+\widetilde{\iota}(H)\neq 2,\\
                    1, & \widetilde{\iota}(G)=\widetilde{\iota}(H)=1,\\
                    \infty, & \widetilde{\iota}(G)+\widetilde{\iota}(H)=2,\ (\widetilde{\iota}(G),\widetilde{\iota}(H))\neq(1, 1),\\
                    \widetilde{\iota}(H), & \widetilde{\iota}(G)=\infty,\ \widetilde{\iota}(H)<\infty,\\
                    \widetilde{\iota}(G), & \widetilde{\iota}(H)=\infty,\ \widetilde{\iota}(G)<\infty,\\
                    \infty, & \widetilde{\iota}(G)=\widetilde{\iota}(H)=\infty.
                \end{cases}
            \end{align}}
    \end{theorem}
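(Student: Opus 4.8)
The plan is to exploit the block structure of the join to reduce the equation $\distmatrix(G+H)\x = \iota\one$ to a scalar system and then read off $\iota(G+H)$ case by case. First I would record the block form of the distance matrix. In $G+H$ every vertex of $G$ is adjacent to every vertex of $H$, so any two vertices of $G$ at $G$-distance at least $2$ (or in distinct components of $G$) are joined by a path of length $2$ through any vertex of $H$, adjacent vertices of $G$ are at distance $1$, and every $G$--$H$ pair is at distance $1$; the same description applies inside $H$. Ordering the vertices of $G+H$ as $V(G)$ followed by $V(H)$ this gives
\[
\distmatrix(G+H) = \begin{bmatrix} \widetilde{D}(G) & J \\ J^\top & \widetilde{D}(H) \end{bmatrix},
\]
with each $J$ an all-ones block of the appropriate size and $\widetilde{D}(G),\widetilde{D}(H)$ the modified distance matrices defined just before the statement.

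Next I would carry out the reduction. Writing $\x = (\x_G,\x_H)$ with $a := \one^\top\x_G$ and $b := \one^\top\x_H$, one has $J\x_H = b\one$ and $J^\top\x_G = a\one$, so $\distmatrix(G+H)\x = \iota\one$ is equivalent to the pair
\[
\widetilde{D}(G)\,\x_G = (\iota-b)\one, \qquad \widetilde{D}(H)\,\x_H = (\iota-a)\one .
\]
The key input is the scalar ``response'' of $\widetilde{D}(G)$: for $c\in\mathbb{R}$, the equation $\widetilde{D}(G)\x_G = c\one$ is solvable with $\one^\top\x_G = c/\widetilde{\iota}(G)$ forced when $\widetilde{\iota}(G)\in\mathbb{R}\setminus\{0\}$; it is solvable with $\one^\top\x_G = 0$ forced when $\widetilde{\iota}(G)=\infty$; and it is solvable only for $c=0$, in which case $\one^\top\x_G$ can take any real value, when $\widetilde{\iota}(G)=0$. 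Each of these follows from the definition of $\widetilde{\iota}(G)$, the symmetry of $\widetilde{D}(G)$ (whence $\ker\widetilde{D}(G)\perp\one$ as soon as $\one\in\operatorname{range}\widetilde{D}(G)$, so that two solutions differing by a kernel vector have the same sum), and, in the last case, the $\widetilde{D}$-analogue of~\cref{lem:dne-positive-sum-kernel}; the same statements hold for $H$.

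Finally I would substitute the responses into the reduced pair, impose $\one^\top\x = 1$ in the form $a+b=1$, and work through the cases. If $\widetilde{\iota}(G),\widetilde{\iota}(H)$ are both finite and nonzero the pair becomes the scalar identities $\widetilde{\iota}(G)a+b=\iota$ and $a+\widetilde{\iota}(H)b=\iota$, which together with $a+b=1$ is a linear system in $(a,b,\iota)$; solving it gives the first line of~\cref{eq:beta-join} when $\widetilde{\iota}(G)+\widetilde{\iota}(H)\ne 2$, the value $1$ when $\widetilde{\iota}(G)=\widetilde{\iota}(H)=1$, and an inconsistent system (hence no unit-sum curvature potential) when $\widetilde{\iota}(G)+\widetilde{\iota}(H)=2$ with $(\widetilde{\iota}(G),\widetilde{\iota}(H))\ne(1,1)$. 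If $\widetilde{\iota}(G)=0$, the first matrix equation forces $b=\iota$ and leaves $a$ free (with $\x_G$ ranging over $\ker\widetilde{D}(G)$), and the second equation then pins $a$, yielding the formula with $\widetilde{\iota}(G)=0$ when $\widetilde{\iota}(H)\in\mathbb{R}\setminus\{2\}$, $\iota(G+H)=\infty$ when $\widetilde{\iota}(H)=2$, and $\iota(G+H)=0=\widetilde{\iota}(G)$ when $\widetilde{\iota}(H)=\infty$. If $\widetilde{\iota}(G)=\infty$, the first equation forces $a=0$, so the second reads $\widetilde{D}(H)\x_H=\iota\one$, giving $b=\iota/\widetilde{\iota}(H)$ and hence $\iota(G+H)=\widetilde{\iota}(H)$ when $\widetilde{\iota}(H)\in\mathbb{R}\setminus\{0\}$, $\iota(G+H)=0=\widetilde{\iota}(H)$ when $\widetilde{\iota}(H)=0$, and $b=0$ as well (so $a+b=0\ne1$, no unit-sum potential) when $\widetilde{\iota}(H)=\infty$; the remaining cases are obtained by swapping $G$ and $H$. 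In every branch in which there is no unit-sum curvature potential, I would note that taking $\iota=1$ in the reduced pair (dropping $a+b=1$) produces a consistent system, so $\distmatrix(G+H)\x=\one$ is solvable and therefore $\iota(G+H)=\infty$ by the conventions of~\cref{defn:curvature-index}; and whenever the reduced system has a unit-sum solution with $\iota=0$, the graph $G+H$ is distance exceptional and $\iota(G+H)=0$, in agreement with~\cref{eq:beta-join}. The uniqueness of the value obtained in each case is guaranteed by~\cref{prop:index-of-curvature} applied to $\distmatrix(G+H)$.

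I expect the main difficulty to be organizational rather than conceptual: the argument fragments into several degenerate sub-cases according to which of $\widetilde{\iota}(G),\widetilde{\iota}(H)$ equal $0$ or $\infty$, and in each one must carefully distinguish the outcome ``$G+H$ is distance exceptional'' ($\iota=0$) from ``$G+H$ has curvature potentials, all of zero mean'' ($\iota=\infty$) from a generic finite value, all three of which are decided by checking solvability of the reduced scalar system. Once the response facts for $\widetilde{D}(G)$ and $\widetilde{D}(H)$ are established, each sub-case reduces to a short scalar computation and no idea beyond the block decomposition is needed.
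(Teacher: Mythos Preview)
Your proposal is correct and follows essentially the same route as the paper: both use the block form $\distmatrix(G+H)=\begin{bmatrix}\widetilde{D}(G)&J\\J&\widetilde{D}(H)\end{bmatrix}$, reduce $\distmatrix(G+H)\x=\iota\one$ with $\one^\top\x=1$ to the scalar system $a+b=1$, $\widetilde{\iota}(G)a+b=\iota$, $a+\widetilde{\iota}(H)b=\iota$, and then solve case by case. Your write-up is in fact a bit more careful than the paper's in isolating the $\widetilde{\iota}=0$ sub-case and in explicitly checking, in each branch yielding no unit-sum solution, that $\distmatrix(G+H)\x=\one$ is nonetheless solvable (so that $\iota(G+H)=\infty$ rather than $0$); the paper handles these points more tersely.
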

    
    \begin{proof}
        For a fixed vector $\x \in\mathbb{R}^{|V(G)|+|V(H)|}$, partition $\x$ and label its components in the following manner:
            \begin{align*}
                \x = \begin{bmatrix}
                    \x_G\\\x_H
                \end{bmatrix}
            \end{align*}
        Write $s = \one^\top\x_G$ and $t = \one^\top \x_H$. By properties of the distance matrix under graph joins (see, e.g., ~\cite{dudarov2023image,stevanovic2009distance}), it holds that
            \begin{align*}
                \distmatrix(G+ H)\x_{G+ H} &= \begin{bmatrix}
                    \widetilde{D}(G) & J\\
                    J & \widetilde{D}(H)
                \end{bmatrix}\begin{bmatrix}
                    \x_G\\\x_H
                \end{bmatrix} = \begin{bmatrix}
                    \widetilde{D}(G)\x_G + ( \one^\top\x_H)\one\\
                    ( \one^\top\x_G)\one + \widetilde{D}(H)\x_H
                \end{bmatrix} = \begin{bmatrix}
                    \widetilde{D}(G)\x_G + t\one\\
                    s\one + \widetilde{D}(H)\x_H
                \end{bmatrix}.
            \end{align*}
        Now $\one^\top \x = 1$ if and only if $s+t = 1$, and $\distmatrix(G+ H)\x_{G+ H} = \iota\one$ if and only if 
            \begin{align*}
                \begin{bmatrix}
                    \widetilde{D}(G)\x_G + t\one\\
                    s\one + \widetilde{D}(H)\x_H
                \end{bmatrix}&= \begin{bmatrix}
                    \iota\one\\
                    \iota\one
                \end{bmatrix}.
            \end{align*}
        The first row of this equation reads $\widetilde{D}(G)\x_G + t\one = \iota\one$, so that $\widetilde{D}(G)\x_G = (\iota - t)\one$. By definition of $\widetilde{\iota}(G)$, this is solvable with $\one^\top x_G = s$ only if $\iota - t = s\widetilde{\iota}(G)$. Similarly, from the second row we have that $\widetilde{D}(H)\x_H = (\iota - s)\one$, so that $\iota - s = t\widetilde{\iota}(H)$. Putting these equations together, we have
            \begin{align}\label{eq:stbeta}
                \begin{cases}
                    s+t &=1,\\
                    \iota - t &= s\widetilde{\iota}(G),\\
                    \iota - s &= t\widetilde{\iota}(H).
                \end{cases}
            \end{align}
        If $\widetilde{\iota}(G),\widetilde{\iota}(H)<\infty$, this can be solved for $\iota$ to recover the first three cases of~\cref{eq:beta-join}. Note the special case of $\widetilde{\iota}(G) = \widetilde{\iota}(H) = 1$, which admits the solution $s=t=1/2$ and $\iota(G+ H) = 1$. If $\widetilde{\iota}(G) = \infty$ and $\widetilde{\iota}(H) < \infty$, then we have the reduced system
            \begin{align*}
                \begin{cases}
                    s+t &=1,\\
                    \iota - s &= t\widetilde{\iota}(H).
                \end{cases}
            \end{align*}
        into which we can put $s=0, t=1$ to recover the fourth case~\cref{eq:beta-join}. The fifth case is similar, and the final case follows from the fact that if both modified indices are infinite, then we force $s=t=0$ and no solution to~\cref{eq:stbeta} exists.
    \end{proof}

    \cref{thm:index-join} recovers and generalizes the result of~\cite[Thm. 4]{dudarov2023image}, which considered the case of $\widetilde{\iota}(G) = 0$ and $\widetilde{\iota}(H) = \infty$.
    
    \begin{remark}
        Although we have stated~\cref{thm:index-join} in several cases, these can all be seen as limits of the first case if one is generous. For example, we have that
            \begin{align*}
                \lim_{\widetilde{\iota}(G) \to \infty}\dfrac{\widetilde{\iota}(G) \widetilde{\iota}(H)-1}{\,\widetilde{\iota}(G)+\widetilde{\iota}(H)-2\,} = \widetilde{\iota}(H),
            \end{align*}
        provided $\widetilde{\iota}(H)<\infty$ and $\widetilde{\iota}(G)+\widetilde{\iota}(H)-2\ne 0$. The special case $\widetilde{\iota}(G)=\widetilde{\iota}(H)=1$ amounts to a removable singularity of the function $x\mapsto \frac{x^2-1}{2(x-1)}$. The other cases can be handled in similar ways.
    \end{remark}

    \section{Computing the curvature index for graph families}\label{sec:computing-index}

    In this section we compute the curvature index for various graph families, which will be instructive for modeling the usage of the graph operations results in the previous section, and will be useful for constructing embeddings in the following section. Our first result concerns trees and symmetric graphs. Before stating it we recall the \emph{Wiener index} of a graph, denoted $W(G)$, which is given by
        \begin{align*}
            W(G) &= \frac{1}{2} \sum_{u, v\in V} d(u, v).
        \end{align*}

    \begin{theorem}\label{thm:transitive-tree}
        Let $G$ be a connected graph on $n$ vertices. Then the following hold:
            \begin{enumerate}[label=(\roman*)]
                \item If $G$ is distance regular, then $\iota(G) = \frac{2W(G)}{n^2}$;
                \item If $G$ is a tree, $\iota(G) = \frac{n-1}{2}$.
            \end{enumerate}
    \end{theorem}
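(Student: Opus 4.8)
The plan is to treat the two parts separately, in each case exhibiting an explicit vector $\x$ with $\one^\top\x = 1$ and $\distmatrix\x\in\mathbb{R}\one$, so that the scalar identified by~\cref{prop:index-of-curvature} can simply be read off.

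For part (i), the point is that a distance regular graph is in particular \emph{transmission regular}: the number of vertices at distance $i$ from a given vertex is independent of that vertex, so every row of $\distmatrix$ has the same sum $\rho$. Summing all entries gives $n\rho = \sum_{u,v\in V}d(u,v) = 2W(G)$, hence $\distmatrix\one = \rho\one$ with $\rho = 2W(G)/n$. Taking $\x := \one/n$ we obtain $\one^\top\x = 1$ and $\distmatrix\x = (\rho/n)\one = \tfrac{2W(G)}{n^2}\one$. Since $n\ge 2$ forces $W(G)>0$, this is a curvature potential of nonzero sum, so the hypotheses of~\cref{defn:curvature-index} hold, and $X(G)\cap\mathbb{R}\one = \tfrac{2W(G)}{n^2}\one$, i.e. $\iota(G) = 2W(G)/n^2$. (In particular $G$ is never distance exceptional here, and the only property used is that $\distmatrix$ has constant row sums.)

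For part (ii), I would induct on $n$, using the standard fact that every tree on $n\ge 2$ vertices is obtained from a single edge $\Complete_2$ by successively attaching $n-2$ pendant vertices (delete a leaf and induct). The base case is $\iota(\Complete_2) = \tfrac12$, which is immediate and is also the $n=2$ instance of the claimed formula. For the inductive step, attaching a pendant vertex to a connected graph $T$ at a vertex $u$ is precisely forming the coalescence $T\,\Circ_{u,v}\,\Complete_2$, where $v$ is an endpoint of $\Complete_2$; since $\iota(\Complete_2) = \tfrac12 < \infty$ and, inductively, $\iota(T) < \infty$, \cref{thm:merging} yields $\iota(T\,\Circ_{u,v}\,\Complete_2) = \iota(T) + \tfrac12 < \infty$. (This increment is exactly the content of~\cref{lem:second-ingredient}, in which the constant $r$ is the curvature index, per~\cref{rmk:basket}.) Hence a tree on $n$ vertices, being a single edge with $n-2$ pendants, has $\iota = \tfrac12 + (n-2)\cdot\tfrac12 = \tfrac{n-1}{2}$.

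Neither part poses a serious obstacle; the only point requiring care is verifying in each case that $G$ falls under the hypotheses of~\cref{defn:curvature-index} so that $\iota(G)$ is a well-defined finite number, but this is automatic since the computed value is nonzero. As an alternative to the induction in (ii), one could invoke the classical Graham--Lov\'asz inverse formula $\distmatrix^{-1} = -\tfrac12 L + \tfrac{1}{2(n-1)}(2\one-\mathbf{d})(2\one-\mathbf{d})^\top$ for the distance matrix of a tree, with $L$ the Laplacian and $\mathbf{d}$ the degree vector; then $\iota(T)^{-1} = \one^\top\distmatrix^{-1}\one = -\tfrac12\one^\top L\one + \tfrac{1}{2(n-1)}\bigl(\one^\top(2\one-\mathbf{d})\bigr)^2 = 0 + \tfrac{4}{2(n-1)} = \tfrac{2}{n-1}$, using $L\one = 0$ and $\sum_i d_i = 2(n-1)$, which again gives $\iota(T) = \tfrac{n-1}{2}$.
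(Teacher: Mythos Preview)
Your proof of part (i) is essentially identical to the paper's: both take $\x = \one/n$ and use that distance regularity forces constant row sums equal to $2W(G)/n$.

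For part (ii), your primary argument is correct but differs from the paper's. The paper cites the known closed form $\kappa_u = \frac{n}{n-1}(2 - d(u))$ for the Steinerberger curvature of a tree (from \cite{chen2023steinerberger,robertson2024discrete}), notes that $\distmatrix$ is invertible for trees, and then applies \cref{prop:representations-of-index}(i) to get $\iota(G) = n/\one^\top\kappa = (n-1)/\sum_u(2-d(u)) = (n-1)/2$. Your inductive argument via \cref{thm:merging} is more self-contained, since it relies only on results already proved within the paper rather than on an externally cited curvature formula; the paper's route, by contrast, is a one-line computation once that formula is granted. Your alternative via the Graham--Lov\'asz inverse is essentially the computation underlying the paper's approach, since the cited formula $\kappa_u = \frac{n}{n-1}(2-d(u))$ is precisely $n\distmatrix^{-1}\one$ read off from that explicit inverse.
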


    \begin{proof}
        For the first claim, since $G$ is distance regular, each row sum of $\distmatrix$ is equal to $\frac{2W(G)}{n}$. Therefore, taking $\x = \frac{1}{n}\one$, we have that
            \begin{align*}
                \distmatrix\x = \distmatrix \left(\tfrac{1}{n}\one\right) = \frac{2W(G)}{n^2}\one,
            \end{align*}
        and the claim follows by definition of $\iota(G)$. For the second claim, let $G$ be a tree on $n$ vertices. It is well known that the distance matrix of a tree is invertible (see~\cite{graham1978distance}), so $\iota(G)\ne 0$. Let $\kappa = \distmatrix^{-1}(n\one)$ denote the Steinerberger curvature of $G$. By~\cite{chen2023steinerberger} (see also~\cite{robertson2024discrete}), we have that
            \begin{align*}
                \kappa_u &= \frac{n}{n-1}(2-d(u)),
            \end{align*}
        so that by~\cref{prop:representations-of-index}, it holds
            \begin{align*}
                \iota(G) &= \frac{n-1}{\sum_{u\in V(G)} (2-d(u))} = \frac{n-1}{2}, 
            \end{align*}
        as claimed. The theorem is proved.
    \end{proof}

    The Wiener indices of many families of graphs are well known (see, e.g., the survey~\cite{knor2019some}), and thus~\cref{thm:transitive-tree}, along with~\cref{thm:index-product}, allows us to identify the curvature indices for many families of graphs. We summarize these in a theorem below, which follows directly in the footsteps of the preceding results and is stated without proof.

    \begin{theorem}\label{thm:examples}
        \begin{enumerate}[label=(\roman*)]
            \item For $n\ge1$, let $\Cycle_n$ denote the $n$-cycle. Then
                \begin{align*}
                    \iota(\Cycle_n) &= \begin{cases}
                        \frac{n}{4}, & n\ \text{even},\\
                        \frac{n^2-1}{4n}, & n\ \text{odd}.
                    \end{cases}
                \end{align*}
            \item For $n\ge 2$ and $d\geq 1$, the $d$-torus $\Cycle_n^{\Box d}$ satisfies
                \begin{align*}
                    \iota(\Cycle_n^{\Box d}) &= \begin{cases}
                        \frac{dn}{4}, & n\ \text{even},\\
                        \frac{d(n^2-1)}{4n}, & n\ \text{odd}.
                    \end{cases}
                \end{align*}
            \item For $d\ge 2$, let $\Cube_d = \Complete_2^{\Box d}$ denote the graph of the $d$-cube. Then
                \begin{align*}
                    \iota(\Cube_d) &= \frac{d}{2}.
                \end{align*}    
            \item For $n, m\geq 1$, the rectangular lattice $\Path_n \,\Box\, \Path_m$ satisfies
                \begin{align*}
                    \iota(\Path_n\,\Box\, \Path_m) &= \frac{n-1}{2} + \frac{m-1}{2}.
                \end{align*}
            \item For $n\geq 1$, the complete graph $\Complete_n$ satisfies
                \begin{align*}
                    \iota(\Complete_n) &= \frac{n-1}{n}.
                \end{align*}
        \end{enumerate}
    \end{theorem}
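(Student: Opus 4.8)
The plan is to derive every item of \cref{thm:examples} from the two structural facts already established: the identity $\iota(G) = 2W(G)/n^2$ for distance regular graphs and the identity $\iota(G) = (n-1)/2$ for trees, both furnished by \cref{thm:transitive-tree}, together with the additivity $\iota(G\,\Box\,H) = \iota(G)+\iota(H)$ of \cref{thm:index-product}. The only genuinely new work is a pair of elementary Wiener index computations.

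First I would treat the distance regular cases, parts (i) and (v). The complete graph $\Complete_n$ is distance regular with $W(\Complete_n) = \binom n2$, so the first claim of \cref{thm:transitive-tree} yields $\iota(\Complete_n) = 2\binom n2/n^2 = (n-1)/n$; equivalently, one checks directly that $\distmatrix(\Complete_n) = J-I$ has constant row sum $n-1$, so $\x = \tfrac1n\one$ is a unit-sum curvature potential realizing this value. For the cycle $\Cycle_n$, which is also distance regular, it remains to compute $W(\Cycle_n)$. The sum of shortest-path distances from a fixed vertex to the other $n-1$ vertices equals $n^2/4$ when $n$ is even --- the antipodal vertex contributes $n/2$ exactly once, while the remaining $n-2$ vertices split into two runs $1,2,\dots,n/2-1$ --- and $\tfrac14(n^2-1)$ when $n$ is odd, where the $n-1$ vertices split into two runs $1,2,\dots,(n-1)/2$. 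Multiplying by $n$ and halving gives $W(\Cycle_n)$, and then $2W(\Cycle_n)/n^2$ simplifies to $n/4$ and to $(n^2-1)/(4n)$ in the two cases.

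Next I would handle the product families, parts (ii)--(iv). Since a Cartesian product of connected graphs is connected, iterating \cref{thm:index-product} gives $\iota(G^{\Box d}) = d\,\iota(G)$ for any connected $G$. Taking $G = \Cycle_n$ and inserting part (i) proves part (ii). Taking $G = \Complete_2$ and noting that $\Complete_2$ is a tree on two vertices --- so $\iota(\Complete_2) = 1/2$ by the second claim of \cref{thm:transitive-tree}, in agreement with part (v) at $n = 2$ --- proves $\iota(\Cube_d) = d/2$, which is part (iii). Finally, $\iota(\Path_n\,\Box\,\Path_m) = \iota(\Path_n)+\iota(\Path_m)$ by \cref{thm:index-product}, and since paths are trees, \cref{thm:transitive-tree} gives $\iota(\Path_k) = (k-1)/2$, so the sum equals $\tfrac{n-1}2+\tfrac{m-1}2$, which is part (iv).

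I do not anticipate a real obstacle here; the proof is essentially bookkeeping. The one place deserving care is the parity split in the Wiener index of $\Cycle_n$ --- in particular, remembering that for even $n$ the antipodal vertex is counted exactly once rather than twice --- and, for completeness, verifying the degenerate instances $\Cycle_1$, $\Cycle_2$, and $\Complete_1$ directly against the stated closed forms, which is immediate.
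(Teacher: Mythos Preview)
Your proposal is correct and matches the paper's approach exactly: the paper states that \cref{thm:examples} ``follows directly in the footsteps of the preceding results and is stated without proof,'' meaning it is intended to be read off from \cref{thm:transitive-tree} and \cref{thm:index-product} together with standard Wiener index values, which is precisely what you do. Your Wiener index computations for $\Cycle_n$ and $\Complete_n$ are accurate, and the iterated use of additivity for the product families is exactly the intended argument.
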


    The case of complete multipartite graphs is particularly useful because they provide a source of negative curvature indices. Computing their curvature indices requires some care and can be carried out using the graph join and~\cref{thm:index-join}.
    
    \begin{theorem}\label{thm:kmn}
        For $k \geq 2$, let $a_1,\dotsc, a_k\ge 1$ be fixed natural numbers, and consider the complete multipartite graph $\Complete_{a_1,\dotsc, a_k}$. Then
            \begin{align}\label{eq:kmn-1}
                \iota(\Complete_{a_1,\dotsc, a_k}) &= \begin{cases}
                   1 + \left(\sum_{i=1}^{k} \frac{a_i}{a_i-2}\right)^{-1} & \text{ if no }a_i = 2,\\
                   1 & \text{ if some }a_i = 2,
                \end{cases}
            \end{align}
        Where $\iota(\Complete_{a_1,\ldots, a_k}) = \infty$ when $\sum_{i=1}^k \frac{a_i}{a_i - 2} = 0$.
        In particular, if $k\geq 5$, then
            \begin{align*}
                \iota(\Complete_{1, 1, k}) &= -\frac{2}{k-4}.
            \end{align*}
    \end{theorem}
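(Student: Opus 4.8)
The plan is to use the large automorphism group of $G := \Complete_{a_1,\dots,a_k}$ to collapse the defining system to a $k\times k$ one. Fix the partition of $V(G)$ into independent sets $P_1,\dots,P_k$ with $|P_i| = a_i$, let $p_i\in\mathbb{R}^n$ be the indicator vector of $P_i$, and write $a = (a_1,\dots,a_k)^\top$. Since $k\ge 2$ the graph has diameter at most $2$, so $\distmatrix = J - I + B$ with $B = \bigoplus_{i=1}^{k}(J_{a_i} - I_{a_i})$. The group $\Gamma = S_{a_1}\times\cdots\times S_{a_k}$ permutes coordinates within each part and fixes both $\distmatrix$ and $\one$; averaging any vector $\x$ with $\one^\top\x = 1$ and $\distmatrix\x\in\mathbb{R}\one$ over $\Gamma$ produces another such vector that is constant on each part, so one may assume $\x = \sum_i c_i p_i$. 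A direct computation gives $\distmatrix\x = (a^\top c)\one + \sum_i (a_i - 2)c_i\, p_i$, so the requirements $\one^\top\x = 1$ and $\distmatrix\x = \iota\one$ become
\begin{align*}
    a^\top c = 1, \qquad (a_i - 2)c_i = \iota - 1\quad(1\le i\le k).
\end{align*}

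From this reduced system I would read off the three cases. If some $a_j = 2$, the $j$-th equation forces $\iota = 1$, and one can still satisfy $a^\top c = 1$ (take $c_i = 0$ whenever $a_i\ne 2$ and distribute mass on the parts of size $2$), so $\iota(G) = 1$. If no $a_i = 2$, then $c_i = (\iota-1)/(a_i-2)$ and $a^\top c = 1$ reads $(\iota - 1)\sum_i \frac{a_i}{a_i-2} = 1$; when $\sum_i \frac{a_i}{a_i-2}\ne 0$ this pins down $\iota(G) = 1 + \bigl(\sum_i \frac{a_i}{a_i-2}\bigr)^{-1}$, the resulting $\x$ being a genuine curvature potential of nonzero sum. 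The remaining case — no $a_i = 2$ and $\sum_i \frac{a_i}{a_i-2} = 0$ — is where $\iota(G) = \infty$: the reduced system then has no solution with finite $\iota$, and I would confirm $\iota(G) = \infty$ (rather than distance exceptional) by showing $\distmatrix$ is nonsingular. This follows by block-diagonalizing $\distmatrix$ along the $\Gamma$-invariant orthogonal splitting $\mathbb{R}^n = \operatorname{span}\{p_i\}\oplus W$, where $W$ is the sum of the mean-zero subspaces of the parts, on which $\distmatrix$ acts as $-2I$; on $\operatorname{span}\{p_i\}$ the matrix of $\distmatrix$ is $\one a^\top + \operatorname{diag}(a - 2\one)$, whose determinant is $\prod_i(a_i - 2)\bigl(1 + \sum_i \frac{a_i}{a_i-2}\bigr)\ne 0$ by the matrix determinant lemma. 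Hence $\distmatrix^{-1}\one$ is the unique curvature potential; being $\Gamma$-invariant it is part-constant, and the reduced system forces its mean to be $0$, so $\iota(G) = \infty$. The ``in particular'' statement is then the substitution $a_1 = a_2 = 1$, $a_3 = k$: since $\sum_i \frac{a_i}{a_i-2} = -2 + \frac{k}{k-2} = \frac{4-k}{k-2}$, which is nonzero for $k\ge 5$, one gets $\iota(\Complete_{1,1,k}) = 1 + \frac{k-2}{4-k} = -\frac{2}{k-4}$.

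The only delicate point I anticipate is the bookkeeping around the degenerate values: the trichotomy ``$\iota = 1$ / finite and nonzero / $\infty$'' is governed entirely by how $\sum_i \frac{a_i}{a_i-2}$ behaves (a pole when some $a_i = 2$, a zero in the exceptional case), and the $\infty$ branch is the one place where a formula manipulation does not suffice and nonsingularity of $\distmatrix$ must be established separately. An alternative that sidesteps the eigenvalue/determinant computation is to write $G$ as the iterated join $(a_1\Complete_1) + \cdots + (a_k\Complete_1)$, note that every partial join with at least two factors has diameter at most $2$ so that $\widetilde{\iota} = \iota$ there (\cref{rmk:diam2-modified-index}), compute $\widetilde{\iota}(a\Complete_1) = 2 - \frac{2}{a}$ from the distance matrix of the star $(a\Complete_1) + \Complete_1$, and apply \cref{thm:index-join} inductively; under the reparametrization $\widetilde{\iota} = 1 + g^{-1}$ the join formula collapses to $g_{G_1 + G_2} = g_{G_1} + g_{G_2}$, and since $g(a\Complete_1) = \frac{a}{a-2}$ (with $g = \infty$ precisely when $a = 2$) this reproduces $\iota(G) = 1 + \bigl(\sum_i \frac{a_i}{a_i-2}\bigr)^{-1}$, the several branches of \cref{thm:index-join} matching the $\iota = 1$ and $\iota = \infty$ cases.
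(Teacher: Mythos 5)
Your proposal is correct, and your main argument takes a genuinely different route from the paper. The paper proves the theorem by writing $\Complete_{a_1,\dotsc,a_k}$ as the iterated join $a_1\Complete_1 + \cdots + a_k\Complete_1$, computing $\widetilde{\iota}(a\Complete_1)=2-2/a$ as a base case, and then inducting through the several branches of \cref{thm:index-join}; your ``alternative'' paragraph is essentially this proof, and the reparametrization $\widetilde{\iota}=1+g^{-1}$, which turns the join formula into the additive rule $g_{G_1+G_2}=g_{G_1}+g_{G_2}$ with $g(a\Complete_1)=\frac{a}{a-2}$, is a clean way to see why all the edge cases of~\cref{eq:beta-join} collapse into the single formula~\cref{eq:kmn-1}. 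Your primary argument instead works directly with $\distmatrix = J-I+\bigoplus_i(J_{a_i}-I_{a_i})$ and the symmetry group $S_{a_1}\times\cdots\times S_{a_k}$: the reduction to the system $a^\top c=1$, $(a_i-2)c_i=\iota-1$ is computed correctly, the three cases fall out exactly as you describe, and your treatment of the $\infty$ branch is actually \emph{more} complete than the paper's, since you verify nonsingularity of $\distmatrix$ via the splitting $\mathbb{R}^n=\operatorname{span}\{p_i\}\oplus W$ (on which $\distmatrix$ acts as $-2I$) and the matrix determinant lemma, whereas the paper simply invokes the $\widetilde{\iota}(G)+\widetilde{\iota}(H)=2$ branch of \cref{thm:index-join}. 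What the direct approach buys is independence from the join machinery and a uniform handling of all cases; what the paper's approach buys is reuse of \cref{thm:index-join} and no need for any spectral or determinant computation. The one cosmetic remark is that the $\Gamma$-averaging step is not actually needed for the finite cases (exhibiting a single part-constant solution suffices by \cref{prop:index-of-curvature}), though it is harmless; and your verification of the ``in particular'' substitution $\sum_i\frac{a_i}{a_i-2}=\frac{4-k}{k-2}$ is correct.
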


    \begin{proof}
        We will prove the statement for all $k \geq 1$, replacing $\iota(\Complete_{a_1,\ldots,a_k})$ with $\widetilde{\iota}(\Complete_{a_1,\ldots,a_k})$ and noting that when $k \geq 2$ these are equal by~\cref{rmk:diam2-modified-index}, since $\mathrm{diam}(\Complete_{a_1,\ldots,a_k}) = 2$ for $k \geq 2$.
        We first observe that a complete multipartite graph is the graph join of independent sets, that is 
            \begin{align*}
                \Complete_{a_1,\ldots,a_k} = a_1\Complete_1 + a_2 \Complete_1 + \cdots + a_k\Complete_1,
            \end{align*}
        where $a\Complete_1$ denotes a collection of $a$ isolated vertices. We will use induction and apply~\cref{thm:index-join}. For the base case, let $a\ge 1$ be a fixed natural number. It follows that $\widetilde{D}(a\Complete_1) = 2(J-I)$, and letting $\vec{x} = \frac{1}{a}\vec{1}$ we have $\vec{1}^\top\vec{x} = 1$ and $\widetilde{D}(a\Complete_1)\vec{x} = \frac{2(a-1)}{a}\vec{1}$, so $\widetilde{\iota}(a\Complete_1) = 2 - 2/a$, and thus~\cref{eq:kmn-1} holds. Now suppose~\cref{eq:kmn-1} holds for any fixed choice of $k-1$ natural numbers $a_1,\dotsc,a_{k-1}$, and let $a_{k}$ be fixed. We first check a few edge cases and then verify the general formula at the end. First, in the case that any $a_i$ is $2$, we have either $\iota(\Complete_{a_1,\ldots,a_{k-1}}) = 1$ by induction or $\widetilde{\iota}(a_k\Complete_1) = 1$ by the preceding calculation, and we get $\iota(\Complete_{a_1,\ldots,a_k}) = 1$ as desired. Next, assume no $a_i=2$. If
            \begin{align}\label{eq:kmn-2}
                \iota(\Complete_{a_1,\ldots,a_{k-1}}) + \widetilde{\iota}(a_k\Complete_1) = 2,
            \end{align}
        we must also have that $\widetilde{\iota}(a_k\Complete_1) \neq 1$ since $a_k\neq 2$, so by~\cref{thm:index-join}, $\iota(\Complete_{a_1,\ldots,a_k}) = \infty$.~\cref{eq:kmn-2} is equivalent to $\sum_{i=1}^k\frac{a_i}{a_i - 2} = 0$, so~\cref{eq:kmn-1} holds with the convention $\tfrac{1}{0}=\infty$. For the final edge case, if $\iota(\Complete_{a_1,\ldots,a_{k-1}}) = \infty$ we must have $\sum_{i=1}^{k-1}\frac{a_k}{a_k - 2} = 0$, so that by~\cref{thm:index-join},
            \begin{align*}
                \iota(\Complete_{a_1,\ldots,a_k} + a_k\Complete_1) = \widetilde{\iota}(a_k\Complete_1) = 2 - \frac{2}{a_k} = 1 + \left(\left[\sum_{i=1}^{k-1}\frac{a_k}{a_k - 2}\right]+\frac{a_k}{a_k-2}\right)^{-1},
            \end{align*}
        and~\cref{eq:kmn-1} holds. Lastly, we check the general case, assuming $\widetilde{\iota}(\Complete_{a_1,\dotsc, a_k}), \widetilde{\iota}(a_k\Complete_1) < \infty$ and that~\cref{eq:kmn-2} does not hold. Then~\cref{thm:index-join} gives
            \begin{align*}
                \iota(\Complete_{a_1,\ldots,a_k}) &= \frac{\left(1 + \left(\sum_{i=1}^{k-1} \frac{a_i}{a_i - 2}\right)^{-1}\right)(2 - 2/a) -1}{1 + \left(\sum_{i=1}^{k-1} \frac{a_i}{a_i - 2}\right)^{-1} +2 - 2/a - 2} \\
                &=1 + \frac{a_k - 2}{(a_k - 2){\sum_{i=1}^{k-1} \frac{a_i}{a_i - 2}} + a_k} = 1 + \left(\sum_{i=1}^k \frac{a_i}{a_i - 2}\right)^{-1}.
            \end{align*}
        By induction, the theorem is proved.
    \end{proof}

    \section{Constructions of embeddings into distance exceptional graphs}\label{sec:embeddings}

    In this section, we apply the results of the previous sections to construct embeddings of various graphs into distance exceptional graphs. We begin with a useful observation that the obstruction $\iota(G) = \infty$ can be easily resolved at the expense of a nonisometric embedding.

    \begin{lemma}\label{lem:infinite-obstruction}
        Let $G=(V,E)$ be any graph on $n$ vertices (connected or otherwise). Then there exists a connected graph $G'$ on at most $n+2$ vertices containing an induced copy of $G$ satisfying $\iota(G') < \infty$. 
    \end{lemma}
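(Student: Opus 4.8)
The plan is to produce $G'$ as an explicit graph join and read its curvature index off the calculus already developed. Concretely, I would take $G' := G + 2\Complete_1$, the join of $G$ with two isolated vertices $z_1, z_2$. Several properties are then immediate: $|V(G')| = n + 2$; the join adds no edges among the vertices of $G$, so $G$ is an induced subgraph of $G'$; and, assuming $G$ has at least one vertex $w$, the graph $G'$ is connected (indeed of diameter at most $2$), since $z_1$ and $z_2$ are each adjacent to every vertex of $G$, so in particular $z_1 \sim w \sim z_2$. Thus the only substantive point is to verify that $\iota(G') < \infty$, for which I would invoke \cref{thm:index-join}.

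The heart of the argument is that $2\Complete_1$ is chosen precisely so that the join formula collapses to a constant. First I would record that $\widetilde\iota(2\Complete_1) = 1$: the matrix $\widetilde D(2\Complete_1)$ is the $2\times 2$ matrix $2(J-I)$, and $\x = \tfrac12\one$ satisfies $\one^\top\x = 1$ together with $\widetilde D(2\Complete_1)\x = \one$ (this is the $a=2$ case of the identity $\widetilde\iota(a\Complete_1) = 2 - 2/a$ used in the proof of \cref{thm:kmn}). Now apply \cref{thm:index-join} with $H = 2\Complete_1$, so that $\widetilde\iota(H) = 1 < \infty$; three mutually exclusive cases arise according to the value of $\widetilde\iota(G)$. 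If $\widetilde\iota(G)$ is finite and different from $1$, then $\widetilde\iota(G) + \widetilde\iota(H) \neq 2$ and the generic case of \cref{thm:index-join} gives $\iota(G') = \bigl(\widetilde\iota(G)\cdot 1 - 1\bigr)/\bigl(\widetilde\iota(G) + 1 - 2\bigr) = (\widetilde\iota(G) - 1)/(\widetilde\iota(G) - 1) = 1$. If $\widetilde\iota(G) = 1$, we are in the exceptional case $(\widetilde\iota(G),\widetilde\iota(H)) = (1,1)$, which \cref{thm:index-join} also sends to $\iota(G') = 1$. If $\widetilde\iota(G) = \infty$, the relevant case of \cref{thm:index-join} gives $\iota(G') = \widetilde\iota(2\Complete_1) = 1$. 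In every case $\iota(G') = 1 < \infty$, as required, and no analysis of the structure of $G$ is needed beyond this trichotomy; the cases of \cref{thm:index-join} with $\widetilde\iota(H) = \infty$ never occur since $\widetilde\iota(2\Complete_1) = 1$.

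It remains only to dispose of the degenerate possibility that $G$ has no vertices, in which case the join $2\Complete_1$ is itself disconnected; there one simply takes $G' = \Complete_1$, which is connected, contains the empty graph $G$ as an induced subgraph, and satisfies $\iota(\Complete_1) = 0 < \infty$ since $\Complete_1$ is (vacuously) distance exceptional, cf.~\cref{lem:beta-zero-is-dx}.

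I do not expect a genuine obstacle here: the only real idea is the choice of the attaching gadget. The naive attempt to cone off $G$, i.e.\ to take $G' = G + \Complete_1$, fails in general, because $\widetilde\iota(\Complete_1) = 0$ and then \cref{thm:index-join} yields $\iota(G + \Complete_1) = \infty$ precisely when $\widetilde\iota(G) = 2$ (for instance $G = \Star_4$, a tree on five vertices, for which $\widetilde\iota = \iota = 2$). Replacing the one-vertex cone by the two-vertex independent set forces the curvature index onto the removable-singularity value $1$ for every input $G$, after which the proof is a one-line appeal to \cref{thm:index-join} together with a $2\times 2$ matrix computation.
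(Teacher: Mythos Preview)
Your proof is correct and uses essentially the same construction as the paper: take $G' = G + 2\Complete_1$, note $\widetilde\iota(2\Complete_1)=1$, and read $\iota(G')=1$ off \cref{thm:index-join}. The only cosmetic difference is that the paper first checks whether $\iota(G)<\infty$ and takes $G'=G$ in that case, whereas you apply the join uniformly and spell out the case analysis in \cref{thm:index-join}; both routes land on the same gadget and the same one-line appeal.
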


    \begin{proof}
        If $\iota(G) < \infty$ take $G'=G$. Otherwise if $\iota(G) = \infty$, let $G' = G + 2\Complete_1$ be the join of $G$ with two isolated vertices. As in the proof of~\cref{thm:kmn}, we have $\widetilde{\iota}(2\Complete_1) = 1$, so by~\cref{thm:index-join}, $\iota(G + 2\Complete_1) = 1 < \infty$.
    \end{proof}

    The next result states that $\iota(\cdot)$ exhausts all rational numbers. In turn, for any $G$ with $\iota(G) < \infty$, there exists a graph $G'$ with $\iota(G') = -\iota(G)$ so that both $G\,\Box\,G'$ and $G\Circ_{u, v} G'$ (for any choice of $u\in V(G), v\in V(G')$) are distance exceptional.

    \begin{theorem}\label{thm:rational-index}
        Let $q\in\mathbb{Q}$. Then there exists a graph $G$ such that $\iota(G) = q$.
    \end{theorem}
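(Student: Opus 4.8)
The plan is to realize $G$ as a Cartesian product of copies of just two kinds of building blocks, and to read off its index from the additivity relation $\iota(G_1\,\Box\,G_2)=\iota(G_1)+\iota(G_2)$ of~\cref{thm:index-product}. The first block is $\Complete_2$, the single edge; being a tree on two vertices,~\cref{thm:transitive-tree} gives $\iota(\Complete_2)=\tfrac12$. The second block is the complete multipartite graph $\Complete_{1,1,2b+4}$ for a positive integer $b$; since $2b+4\ge 6$, the case $k\ge 5$ of~\cref{thm:kmn} applies and yields $\iota(\Complete_{1,1,2b+4})=-\tfrac{2}{(2b+4)-4}=-\tfrac1b$. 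Both blocks are connected and have finite curvature index, so any Cartesian product of copies of them is connected with finite index, and its index is the corresponding $\mathbb{Z}_{\ge 0}$-combination of $\tfrac12$ and $-\tfrac1b$.

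Now fix $q\in\mathbb{Q}$ and write $q=a/b$ with $b\ge 1$ and $a\in\mathbb{Z}$. Put $c=\max(0,\lceil a/b\rceil)$ and $t=cb-a$; then $c,t\ge 0$ are integers. The next step is to form
\[
    G \;=\; \bigl(\text{$2c$ copies of $\Complete_2$}\bigr)\ \Box\ \bigl(\text{$t$ copies of $\Complete_{1,1,2b+4}$}\bigr),
\]
all joined by Cartesian products. For the degenerate powers one argues separately: when $t=0$ (so $q=c$ is a nonnegative integer) take $G=\Complete_2^{\,\Box\,2c}$; when $c=0$ (so $q\le 0$) take $G=\Complete_{1,1,2b+4}^{\,\Box\,(-a)}$; and when $c=t=0$ (so $q=0$) take $G=\Complete_2\,\Box\,\Complete_{1,1,8}$, which has index $\tfrac12-\tfrac12=0$, or simply any distance exceptional graph. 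In every case~\cref{thm:index-product} gives
\[
    \iota(G) \;=\; 2c\cdot\tfrac12 \;+\; t\cdot\bigl(-\tfrac1b\bigr) \;=\; c-\frac{cb-a}{b} \;=\; \frac ab \;=\; q,
\]
as desired.

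I expect essentially all of the verification to be routine bookkeeping: that a Cartesian product of connected graphs is connected, that~\cref{thm:index-product} genuinely applies because every factor has finite index (so in particular $\iota(G)\neq\infty$), and the handling of the degenerate powers above. The single substantive ingredient — and the closest thing to an obstacle — is the observation, already packaged in~\cref{thm:kmn}, that the graphs $\Complete_{1,1,k}$ realize the negative unit fractions $-\tfrac{2}{k-4}$; once one has, for each denominator, a block of negative index together with the positive block $\Complete_2$, additivity of $\iota$ forces every rational into range via an integer combination, so no real difficulty remains. (If one prefers the Egyptian-fraction presentation mentioned in the introduction, one instead writes $|q|$ as a sum of unit fractions and realizes each summand by a single multipartite or $\Complete_2$ block; this is equivalent to, but slightly longer than, the direct integer combination above.)
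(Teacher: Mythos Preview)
Your proof is correct and follows essentially the same approach as the paper: both exploit additivity of $\iota$ under Cartesian products, use the complete tripartite graphs $\Complete_{1,1,k}$ from \cref{thm:kmn} as negative building blocks, and use a small tree ($\Complete_2$ for you, $\Path_3$ in the paper) as the positive block. The only difference is that the paper decomposes $|q|$ (or $m-q$) as an Egyptian fraction with varying denominators, whereas you fix the single denominator $b$ of $q$ and take repeated copies of $\Complete_{1,1,2b+4}$; your version is marginally more direct since it sidesteps the Egyptian-fraction step entirely.
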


    \begin{proof}
        If $q=0$, supply any distance exceptional graph and the claim follows. Suppose $q < 0$. Write $-q$ as an Egyptian fraction:
            \begin{align*}
                -q = \sum_{i=1}^{k} \frac{1}{n_i},\quad n_i\in\mathbb{N},\,1\leq i \leq k.
            \end{align*}
        Then we have that, by~\cref{thm:kmn}, it holds
            \begin{align*}
                \iota(\Complete_{1, 1, 2n_i+4}) = -\frac{1}{n_i}.
            \end{align*}
        Therefore
            \begin{align*}
                \iota\left(\Box_{i=1}^{k} \Complete_{1, 1, 2n_i+4} \right) = q.
            \end{align*}
        Alternatively, one may take $\Complete_{1, 1, 2n_1+4} \Circ_{u_1, v_1} \Complete_{1, 1, 2n_2+4} \cdots \Circ_{u_{k-1}, v_{k-1}} \Complete_{1, 1, 2n_k+4}$ in any manner to arrive at the same result. Now suppose $q>0$. Find $m\in\mathbb{N}$ such that $m>q$. Write $m-q =: r \in \mathbb{Q}_{>0}$. Then find an Egyptian fraction for $r$:
            \begin{align*}
                r = \sum_{i=1}^{k} \frac{1}{n_i},\quad n_i\in\mathbb{N},\,1\leq i \leq k.
            \end{align*}
        Put
            \begin{align*}
                G = \left(\Box_{i=1}^{k} \Complete_{1, 1, 2n_i+4}\right) \;\;\square\;\; \Path_3^{\Box m}.
            \end{align*}
        Since $\iota(\Path_3) = 1$ via~\cref{thm:transitive-tree} and $\iota(\Complete_{1, 1, 2n_i+4}) = -\frac{1}{n_i}$ via~\cref{thm:kmn} we have that
            \begin{align*}
                \iota(G) &= m - \sum_{i=1}^{k} \frac{1}{n_i} = m-r = q.
            \end{align*}
        The theorem is proved.
    \end{proof}

    Below we state and prove our main result for this section.

    \begin{theorem}\label{thm:embedding}
        Let $G$ be a graph, connected or otherwise. Then there exists a distance exceptional graph $G'$ containing a copy of $G$ as an induced subgraph. If $G$ is connected and $\iota(G) < \infty$, then this embedding can be taken to be isometric. 
    \end{theorem}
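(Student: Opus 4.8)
The plan is to assemble the machinery already in place: \cref{lem:infinite-obstruction} to clear the obstruction $\iota(G)=\infty$ (and to reduce to the connected case), \cref{thm:rational-index} to build a graph whose curvature index is the negative of any prescribed rational, \cref{thm:merging} to add the two indices and land on $0$, and \cref{lem:beta-zero-is-dx} to identify index-zero graphs with distance exceptional ones. The one ingredient not yet recorded explicitly, which I would establish first, is that $\iota(G)\in\mathbb{Q}\cup\{\infty\}$ for every connected graph $G$: the distance matrix $\distmatrix$ has integer entries, so $\distmatrix^+$ is rational, whence $\kappa=\distmatrix^+(n\one)$ is a rational vector and $\iota(G)=n/(\one^\top\kappa)$ is rational whenever it is finite (the distance exceptional case contributing $\iota(G)=0$), by \cref{prop:representations-of-index}.

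I would then treat the connected, finite-index case, which simultaneously yields the isometric assertion. Let $G$ be connected with $\iota(G)=q\in\mathbb{Q}$. If $q=0$, then $G$ is already distance exceptional by \cref{lem:beta-zero-is-dx} and we take $G'=G$. If $q\ne 0$, apply \cref{thm:rational-index} to produce a graph $H$ with $\iota(H)=-q$; inspection of the construction in that theorem shows $H$ may be taken connected. Fix any $u_0\in V(G)$ and $v_0\in V(H)$ and set $G':=G\,\Circ_{u_0,v_0}\,H$. Then $G$ is an induced subgraph of $G'$ — vertex coalescence introduces no edges among the vertices of $G$ — and the embedding is isometric, since the glued vertex $w$ is a cut vertex of $G'$: a shortest $G'$-path between two vertices of $G$ can pass through $w$ at most once and hence never crosses into the $H$-part, so $d_{G'}$ agrees with $d_G$ on $V(G)$. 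Finally \cref{thm:merging} gives $\iota(G')=\iota(G)+\iota(H)=q-q=0$, and one more application of \cref{lem:beta-zero-is-dx} shows $G'$ is distance exceptional.

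For the general statement — $G$ possibly disconnected, possibly with $\iota(G)=\infty$ — I would first invoke \cref{lem:infinite-obstruction} to obtain a connected graph $G''$ on at most $n+2$ vertices that contains $G$ as an induced subgraph and has $\iota(G'')<\infty$, hence $\iota(G'')\in\mathbb{Q}$ by the preliminary observation. Applying the previous paragraph to $G''$ yields a distance exceptional graph $G'$ containing $G''$, and therefore $G$, as an induced subgraph. No isometry is claimed here, which is appropriate: the cone step inside \cref{lem:infinite-obstruction} generally contracts distances, and isometry is not even meaningful when $G$ is disconnected.

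The substance of the argument is concentrated in two places. The first is the rationality of $\iota(G)$, without which \cref{thm:rational-index} could not be applied; this is immediate from the integrality of $\distmatrix$ together with the Moore--Penrose formula of \cref{prop:representations-of-index}, and is the step I would be most careful to state cleanly. The second is the isometry of the coalescence embedding, which rests on the elementary fact that the identified vertex separates the two halves of $G\,\Circ_{u_0,v_0}\,H$. Everything else is a direct combination of \cref{lem:infinite-obstruction,thm:rational-index,thm:merging,lem:beta-zero-is-dx}.
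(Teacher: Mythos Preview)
Your proof is correct and follows essentially the same route as the paper: reduce to the connected finite-index case via \cref{lem:infinite-obstruction}, manufacture a graph of index $-\iota(G)$ via \cref{thm:rational-index}, coalesce, and conclude with \cref{thm:merging} and \cref{lem:beta-zero-is-dx}. Your explicit verification that $\iota(G)\in\mathbb{Q}\cup\{\infty\}$ is in fact a point the paper's own proof silently assumes when it invokes \cref{thm:rational-index}, so your version is, if anything, slightly more complete.
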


    \begin{proof}
        By~\cref{lem:infinite-obstruction}, there exists a graph $G'$ containing an induced copy of $G$ which satisfies $\iota(G')<\infty$. By the proof of~\cref{lem:infinite-obstruction}, if $\iota(G)<\infty$, we can take $G'=G$. By~\cref{thm:rational-index}, there exists a graph $H$ such that $\iota(H) = -\iota(G')$. Fix any vertex $u$ of $G'$ and any vertex $v$ of $H$, and consider $G'' = G' \Circ_{u, v} H$. Then $G''$ contains an induced copy of $G'$, and this embedding of $G'$ into $G''$ is an isometry of $G'$. By~\cref{thm:merging}, $\iota(G'') = \iota(G) - \iota(G) = 0$, so by~\cref{lem:beta-zero-is-dx}, $G''$ is distance exceptional. The theorem is proved.
    \end{proof}

    The proofs of~\cref{lem:infinite-obstruction},~\cref{thm:rational-index}, and~\cref{thm:embedding} are constructive, so we can state an algorithm for constructing distance exceptional graphs containing embedded copies of given ones.
    
    \begin{algorithm}[H]
        \caption{ Iterative procedure for creating a distance exceptional graph with given induced subgraph }\label{alg:X}
        \begin{algorithmic}[1]
        \REQUIRE $G$ is a given graph
        \IF{$G$ is disconnected or $\iota(G) = \infty$}
        \STATE $G'\gets G+ (\Complete_1\cup \Complete_1)$
        \ELSE 
        \STATE $G'\gets G$
        \ENDIF
        \WHILE{$\iota(G') < 0$}
        \STATE $G' \gets\texttt{merge}(G', \Complete_2)$
        \ENDWHILE
        \STATE $(n_1,\dotsc, n_k) \gets \mathtt{egyptian\_fraction}(\iota(G'))$
        \FOR{$i=1,\dotsc, k$}
        \STATE $G'\gets\mathtt{merge}(G', \Complete_{1, 1, 2 n_i+4})$
        \ENDFOR
        \RETURN $G'$
        \end{algorithmic}
    \end{algorithm}

    In~\cref{alg:X}, $\mathtt{egyptian\_fraction}(r)$ is any function which returns a $k$-tuple of natural numbers $n_1,\dotsc, n_k$ such that
        \begin{align*}
            r = \sum_{i=1}^{k}\frac{1}{n_i}, \quad r\in\mathbb{Q}_{>0}.
        \end{align*}
    The function $\mathtt{merge}(G, H)$ selects two distinguished vertices $u\in V(G)$ and $v\in V(H)$ in an arbitrary manner and returns the graph $G\,\Circ_{u, v} H$. In the remainder of this section, we highlight a few examples of~\cref{alg:X} in action.

    \begin{figure}
        \begin{center}
            \input{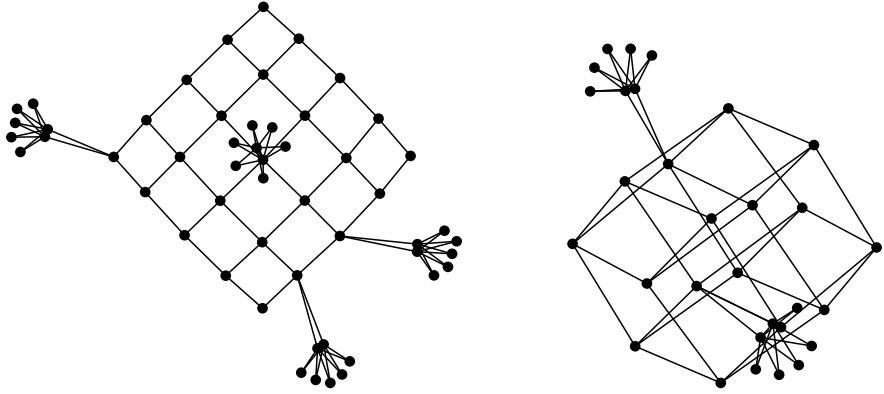}
            \hspace{1cm}
            \begin{tikzpicture}
	\begin{scope}[shift={(0, 0.0)}, scale=2.0]
		\coordinate (V-0000-0) at (-0.511, -0.251) circle (2pt);
		\coordinate (V-0000-1) at (0.184, 0.268) circle (2pt);
		\coordinate (V-0000-2) at (-1.0, 0.012) circle (2pt);
		\coordinate (V-0000-3) at (-0.371, 0.541) circle (2pt);
		\coordinate (V-0000-4) at (-0.086, 0.18) circle (2pt);
		\coordinate (V-0000-5) at (0.588, 0.667) circle (2pt);
		\coordinate (V-0000-6) at (-0.656, 0.426) circle (2pt);
		\coordinate (V-0000-7) at (0.025, 0.91) circle (2pt);
		\coordinate (V-0000-8) at (-0.025, -0.91) circle (2pt);
		\coordinate (V-0000-9) at (0.656, -0.426) circle (2pt);
		\coordinate (V-0000-10) at (-0.588, -0.667) circle (2pt);
		\coordinate (V-0000-11) at (0.086, -0.18) circle (2pt);
		\coordinate (V-0000-12) at (0.371, -0.541) circle (2pt);
		\coordinate (V-0000-13) at (1.0, -0.012) circle (2pt);
		\coordinate (V-0000-14) at (-0.184, -0.268) circle (2pt);
		\coordinate (V-0000-15) at (0.511, 0.251) circle (2pt);
		\coordinate (V-0000-16) at (-0.589, 1.04) circle (2pt);
		\coordinate (V-0000-17) at (-0.652, 1.026) circle (2pt);
		\coordinate (V-0000-18) at (-0.478, 1.261) circle (2pt);
		\coordinate (V-0000-19) at (-0.856, 1.18) circle (2pt);
		\coordinate (V-0000-20) at (-0.618, 1.305) circle (2pt);
		\coordinate (V-0000-21) at (-0.884, 1.023) circle (2pt);
		\coordinate (V-0000-22) at (-0.77, 1.304) circle (2pt);
		\coordinate (V-0000-23) at (0.237, -0.609) circle (2pt);
		\coordinate (V-0000-24) at (0.318, -0.517) circle (2pt);
		\coordinate (V-0000-25) at (0.488, -0.792) circle (2pt);
		\coordinate (V-0000-26) at (0.205, -0.82) circle (2pt);
		\coordinate (V-0000-27) at (0.478, -0.413) circle (2pt);
		\coordinate (V-0000-28) at (0.573, -0.665) circle (2pt);
		\coordinate (V-0000-29) at (0.359, -0.856) circle (2pt);
		\fill[black] (V-0000-0) circle (1pt);
		\fill[black] (V-0000-1) circle (1pt);
		\fill[black] (V-0000-2) circle (1pt);
		\fill[black] (V-0000-3) circle (1pt);
		\fill[black] (V-0000-4) circle (1pt);
		\fill[black] (V-0000-5) circle (1pt);
		\fill[black] (V-0000-6) circle (1pt);
		\fill[black] (V-0000-7) circle (1pt);
		\fill[black] (V-0000-8) circle (1pt);
		\fill[black] (V-0000-9) circle (1pt);
		\fill[black] (V-0000-10) circle (1pt);
		\fill[black] (V-0000-11) circle (1pt);
		\fill[black] (V-0000-12) circle (1pt);
		\fill[black] (V-0000-13) circle (1pt);
		\fill[black] (V-0000-14) circle (1pt);
		\fill[black] (V-0000-15) circle (1pt);
		\fill[black] (V-0000-16) circle (1pt);
		\fill[black] (V-0000-17) circle (1pt);
		\fill[black] (V-0000-18) circle (1pt);
		\fill[black] (V-0000-19) circle (1pt);
		\fill[black] (V-0000-20) circle (1pt);
		\fill[black] (V-0000-21) circle (1pt);
		\fill[black] (V-0000-22) circle (1pt);
		\fill[black] (V-0000-23) circle (1pt);
		\fill[black] (V-0000-24) circle (1pt);
		\fill[black] (V-0000-25) circle (1pt);
		\fill[black] (V-0000-26) circle (1pt);
		\fill[black] (V-0000-27) circle (1pt);
		\fill[black] (V-0000-28) circle (1pt);
		\fill[black] (V-0000-29) circle (1pt);
		\draw[line width=0.2mm, black] (V-0000-0) -- (V-0000-8);
		\draw[line width=0.2mm, black] (V-0000-0) -- (V-0000-4);
		\draw[line width=0.2mm, black] (V-0000-0) -- (V-0000-2);
		\draw[line width=0.2mm, black] (V-0000-0) -- (V-0000-1);
		\draw[line width=0.2mm, black] (V-0000-1) -- (V-0000-9);
		\draw[line width=0.2mm, black] (V-0000-1) -- (V-0000-5);
		\draw[line width=0.2mm, black] (V-0000-1) -- (V-0000-3);
		\draw[line width=0.2mm, black] (V-0000-2) -- (V-0000-10);
		\draw[line width=0.2mm, black] (V-0000-2) -- (V-0000-6);
		\draw[line width=0.2mm, black] (V-0000-2) -- (V-0000-3);
		\draw[line width=0.2mm, black] (V-0000-3) -- (V-0000-11);
		\draw[line width=0.2mm, black] (V-0000-3) -- (V-0000-7);
		\draw[line width=0.2mm, black] (V-0000-3) -- (V-0000-16);
		\draw[line width=0.2mm, black] (V-0000-3) -- (V-0000-17);
		\draw[line width=0.2mm, black] (V-0000-4) -- (V-0000-12);
		\draw[line width=0.2mm, black] (V-0000-4) -- (V-0000-6);
		\draw[line width=0.2mm, black] (V-0000-4) -- (V-0000-5);
		\draw[line width=0.2mm, black] (V-0000-5) -- (V-0000-13);
		\draw[line width=0.2mm, black] (V-0000-5) -- (V-0000-7);
		\draw[line width=0.2mm, black] (V-0000-6) -- (V-0000-14);
		\draw[line width=0.2mm, black] (V-0000-6) -- (V-0000-7);
		\draw[line width=0.2mm, black] (V-0000-7) -- (V-0000-15);
		\draw[line width=0.2mm, black] (V-0000-8) -- (V-0000-12);
		\draw[line width=0.2mm, black] (V-0000-8) -- (V-0000-10);
		\draw[line width=0.2mm, black] (V-0000-8) -- (V-0000-9);
		\draw[line width=0.2mm, black] (V-0000-9) -- (V-0000-13);
		\draw[line width=0.2mm, black] (V-0000-9) -- (V-0000-11);
		\draw[line width=0.2mm, black] (V-0000-10) -- (V-0000-14);
		\draw[line width=0.2mm, black] (V-0000-10) -- (V-0000-11);
		\draw[line width=0.2mm, black] (V-0000-11) -- (V-0000-15);
		\draw[line width=0.2mm, black] (V-0000-12) -- (V-0000-14);
		\draw[line width=0.2mm, black] (V-0000-12) -- (V-0000-13);
		\draw[line width=0.2mm, black] (V-0000-13) -- (V-0000-15);
		\draw[line width=0.2mm, black] (V-0000-14) -- (V-0000-15);
		\draw[line width=0.2mm, black] (V-0000-14) -- (V-0000-23);
		\draw[line width=0.2mm, black] (V-0000-14) -- (V-0000-24);
		\draw[line width=0.2mm, black] (V-0000-16) -- (V-0000-17);
		\draw[line width=0.2mm, black] (V-0000-16) -- (V-0000-18);
		\draw[line width=0.2mm, black] (V-0000-16) -- (V-0000-19);
		\draw[line width=0.2mm, black] (V-0000-16) -- (V-0000-20);
		\draw[line width=0.2mm, black] (V-0000-16) -- (V-0000-21);
		\draw[line width=0.2mm, black] (V-0000-16) -- (V-0000-22);
		\draw[line width=0.2mm, black] (V-0000-17) -- (V-0000-18);
		\draw[line width=0.2mm, black] (V-0000-17) -- (V-0000-19);
		\draw[line width=0.2mm, black] (V-0000-17) -- (V-0000-20);
		\draw[line width=0.2mm, black] (V-0000-17) -- (V-0000-21);
		\draw[line width=0.2mm, black] (V-0000-17) -- (V-0000-22);
		\draw[line width=0.2mm, black] (V-0000-23) -- (V-0000-24);
		\draw[line width=0.2mm, black] (V-0000-23) -- (V-0000-25);
		\draw[line width=0.2mm, black] (V-0000-23) -- (V-0000-26);
		\draw[line width=0.2mm, black] (V-0000-23) -- (V-0000-27);
		\draw[line width=0.2mm, black] (V-0000-23) -- (V-0000-28);
		\draw[line width=0.2mm, black] (V-0000-23) -- (V-0000-29);
		\draw[line width=0.2mm, black] (V-0000-24) -- (V-0000-25);
		\draw[line width=0.2mm, black] (V-0000-24) -- (V-0000-26);
		\draw[line width=0.2mm, black] (V-0000-24) -- (V-0000-27);
		\draw[line width=0.2mm, black] (V-0000-24) -- (V-0000-28);
		\draw[line width=0.2mm, black] (V-0000-24) -- (V-0000-29);
	\end{scope}
\end{tikzpicture}
        \end{center}
        \caption{This figure contains two distance exceptional graphs. \emph{(Left)} A distance exceptional graph containing an induced copy of the square lattice $\Path_5\, \Box\,\Path_5$, formed by merging $4$ copies of $\Complete_{1, 1, 6}$ to various vertices in an arbitrary manner. \emph{(Right)} A distance exceptional graph containing an induced copy of $\Cube_4$, formed by merging $2$ copies of $\Complete_{1, 1, 6}$ to various vertices in an arbitrary manner.}\label{fig:grid-jailbreak}
    \end{figure}

    \begin{example}[Square lattice graphs]\label{ex:lattice}\normalfont
        Consider the rectangular lattice graph $G=\Path_{n}\,\Box\,\Path_{n}$. By~\cref{thm:examples}, we have that $\iota(G) = n-1$. Applying~\cref{alg:X}, since $\iota(G) > 0$, we skip to line 7. An Egyptian fraction for $n-1$ is given by
            \begin{align*}
                n-1 &= \sum_{i=1}^{n-1} (1),\quad n_i = 1,\quad 1\leq i\leq n-1.
            \end{align*}
        Therefore, we perform $n-1$ merges with $\Complete_{1, 1, 6}$ to obtain a distance exceptional graph containing an isometric copy of $\Path_n\,\Box\,\Path_n$. We illustrate an example of a distance exceptional graph obtained via this construction, with $n=5$, in~\cref{fig:grid-jailbreak}.
    \end{example}

    \begin{example}[Hypercube graphs]\normalfont
        Let $k\ge 1$ and consider the hypercube graph $\Cube_{2k}$. Then by~\cref{thm:examples}, $\Cube_{2k}$ satisfies $\iota(\Cube_{2k}) = k$. Applying the same setup as in~\cref{ex:lattice}, it follows that we may construct a distance exceptional graph containing an isometric copy of $\Cube_{2k}$ by merging $k$ copies of $\Complete_{1, 1, 6}$. An example of such a distance exceptional graph containing an isometric copy of $\Cube_4$ is illustrated in~\cref{fig:grid-jailbreak}.
    \end{example}

    \section*{Acknowledgements}

    The authors gratefully acknowledge support from the Mathematics Graduate Student Council at UC San Diego in the form of their fourth-annual \emph{Mathathon} workshop at which this article was initiated. The authors also gratefully acknowledge Stefan Steinerberger for helpful comments on the manuscript during its preparation. The authors have no conflicts of interest, financial or otherwise, to disclose.

    \appendix

    \section{Computing the curvature index of a basket graph}\label{sec:appendix-basket}

    In this section, we prove~\cref{lem:curvature-basket}. By convention, we enumerate the vertices of $\Basket_k$ beginning with the first shared endpoint, followed by the $k-2$ internal vertices that remain from the copy $\Path_{k}$, and then the second shared endpoint. These are then followed by the internal vertices of each copy of $\Path_{k+1}$ (see~\cref{fig:basket}). 

    \begin{proof}[Proof of~\cref{lem:curvature-basket}]
        Let $k\geq 3$ be fixed. We partition the vertices of $\Basket_k$ into four disjoint sets as follows. First, let $\mathcal{A}_1 = \{v_1, \dotsc, v_k\}$ contain the $k$ vertices of the path $\Path_k$ in $\Basket_k$ including the endpoints of degree four. Then, for $i=2, 3, 4$, let $\mathcal{A}_i$ contain the $k-1$ interior vertices of each disjoint path $\Path_{k+1}$.  Based on this partition, the distance matrix $\distmatrix(\Basket_k)=:\distmatrix_k$ admits the following block decomposition:
            \begin{align*}
                \distmatrix_k &= \begin{bmatrix}
                    {P}_{k} & {B} & {B} & {B}\\
                    {B}^\top & {P}_{k-1} & {C} & {C}\\
                    {B}^\top & {C}^\top & {P}_{k-1} & {C}\\
                    {B}^\top & {C}^\top & {C}^\top & {P}_{k-1}
                \end{bmatrix}.
            \end{align*}
        Here, ${P}_m\in\mathbb{R}^{m\times m}$ denotes the distance matrix of the path graph $\Path_m$, ${B}\in\mathbb{R}^{k\times (k-1)}$ contains the distances between the shorter path and the interior vertices of each longer path, and finally ${C}\in\mathbb{R}^{(k-1)\times (k-1)}$ contains the distances between interior vertices of distinct longer paths.

        To construct the vector $\x $ satisfying $\one^\top\x  = 1$ and $\distmatrix_k \x\propto \one $, we proceed as follows. For $m\geq 1$, let $\vec{a}^{(m)}\in\mathbb{R}^{m}$ be given by the equation
            \begin{align}\label{eq:defn-am}
                \vec{a}^{(m)} &= \frac{1}{2} \begin{bmatrix}
                    1\\
                    {-3}\\
                    \vdots\\
                    ({-3})^{m-2}\\
                    ({-3})^{m-1}
                \end{bmatrix} + \frac{1}{2} \begin{bmatrix}
                    ({-3})^{m-1}\\
                    ({-3})^{m-2}\\
                    \vdots\\
                    {-3}\\
                    1
                \end{bmatrix},\quad \vec{a}^{(m)}_i &= \frac{1}{2}\left((-3)^{i-1} + (-3)^{m-i}\right),\quad 1\leq i \leq m.
            \end{align}
        Next, set
            \begin{align*}
                \x  &= \begin{bmatrix}
                    \vec{a}^{(k)}\\
                    \vec{a}^{(k-1)}\\
                    \vec{a}^{(k-1)}\\
                    \vec{a}^{(k-1)}
                \end{bmatrix}.
            \end{align*}

        First we observe that
            \begin{align*}
                \one^\top \x  &= \sum_{i=1}^{k} \vec{a}^{(k)}_i + 3\sum_{i=1}^{k-1} \vec{a}^{(k-1)}_i\\
                &= \frac{1}{2}\sum_{i=1}^{k} \left((-3)^{i-1} + (-3)^{k-i}\right) + \frac{3}{2}\sum_{i=1}^{k-1} \left((-3)^{i-1} + (-3)^{k-1-i}\right)\\
                &= \frac{1}{2}\left(\frac{1 - (-3)^{k}}{1 - (-3)} + \frac{1 - (-3)^{k}}{1 - (-3)}\right) + \frac{3}{2}\left(\frac{1 - (-3)^{k-1}}{1 - (-3)} + \frac{1 - (-3)^{k-1}}{1 - (-3)}\right)\\
                &= \frac{1 - (-3)^{k} + 3(1 - (-3)^{k-1})}{1 - (-3)} = 1,
            \end{align*}
        which proves the first half of~\cref{eq:alpha-k}. Next we claim $\distmatrix_k \x  \propto \one $. To see this, first we compute in block form the equation
            \begin{align}\label{eq:block-form-Dy}
                \distmatrix_k\x  &= \begin{bmatrix}
                    P_k\vec{a}^{(k)} + 3{B}\vec{a}^{(k-1)}\\
                    {B}^\top \vec{a}^{(k)} + {P}_{k-1}\vec{a}^{(k-1)} + 2{C}\vec{a}^{(k-1)}\\
                    {B}^\top \vec{a}^{(k)} + {P}_{k-1}\vec{a}^{(k-1)} + 2{C}\vec{a}^{(k-1)}\\
                    {B}^\top \vec{a}^{(k)} + {P}_{k-1}\vec{a}^{(k-1)} + 2{C}\vec{a}^{(k-1)}
                \end{bmatrix}.
            \end{align}
        We consider each term in turn. First, we have that for $1\leq i\leq k-1$, it holds
            \begin{align*}
                (P_k\vec{a}^{(k)})_{i+1} - ({(P_{k})}\vec{a}^{(k)})_{i} &= \sum_{j=1}^{k} {(P_{k})}_{i+1, j} \vec{a}^{(k)}_{j} - \sum_{j=1}^{k} {(P_{k})}_{i, j} \vec{a}^{(k)}_{j}\\
                &= \sum_{j=1}^{k} |i+1 - j| \vec{a}^{(k)}_{j} - \sum_{j=1}^{k} |i - j| \vec{a}^{(k)}_{j}\\
                &= \sum_{j=1}^{i} ((i+1 - j) - (i - j)) \vec{a}^{(k)}_{j} + \sum_{j=i+1}^{k} ((j - (i+1)) - (j - i)) \vec{a}^{(k)}_{j}\\
            \end{align*}
        For $s, N\ge 1$, set
            \begin{align*}
                T_{s, N} &= \sum_{i=1}^{N} \vec{a}^{(s)}_i.
            \end{align*}
        Thus,
            \begin{align}\label{eq:onesteppath}
                \sum_{j=1}^{i}& ((i+1 - j) - (i - j)) \vec{a}^{(k)}_{j} + \sum_{j=i+1}^{k} ((j - (i+1)) - (j - i)) \vec{a}^{(k)}_{j}\notag\\
                &= \sum_{j=1}^{i} \vec{a}^{(k)}_{j} - \sum_{j=i+1}^{k} \vec{a}^{(k)}_{j}\notag\\
                &= T_{k, i} - (T_{k, k} - T_{k, i}) = 2 T_{k, i} - T_{k, k}.
            \end{align}
        Separately, again for $1\leq i\leq k$ fixed, we have
            \begin{align*}
                ({B}\vec{a}^{(k-1)})_{i} &= \sum_{j=1}^{k-1} {B}_{i, j} \vec{a}^{(k-1)}_{j}\\
                &= \sum_{j=1}^{k-1} \min\{ i -1 +j , 2k-i-j \} \vec{a}^{(k-1)}_{j}.
            \end{align*}
        Therefore for each $1\leq i \leq k-1$, it holds
            \begin{align*}
                ({B}\vec{a}^{(k-1)})_{i+1} - ({B}\vec{a}^{(k-1)})_{i} &= \sum_{j=1}^{k-i-1} \vec{a}^{(k-1)}_{j} - \sum_{j=k - i+1}^{k-1} \vec{a}^{(k-1)}_{j}\\
                &= T_{k-1, k -i- 1} - T_{k-1, k-1} - T_{k-1, k-i}.
            \end{align*}    
        Next write, with $r=-3$ for shorthand,
            \begin{align}\label{eq:tki-exp}
                T_{k, i} &= \frac{1-r^{i} + r^{k-i} - r^{k}}{2(1-r)},
            \end{align}
        so that
            \begin{align}\label{eq:calc1}
                2 T_{k, i} - T_{k, k} &= \frac{1 - r^{i} + r^{k-i} - r^{k} - (1 - r^{k})}{1-r} = \frac{- r^{i} + r^{k-i}}{1-r},
            \end{align}
        and
            \begin{align}\label{eq:calc2}
                T_{k-1, k -i- 1} - T_{k-1, k-1} - T_{k-1, k-i} &= \frac{-r^{k-i-1} - r^{k-i} + r^{i} + r^{i-1}}{2(1-r)}.
            \end{align}
        Combining~\cref{eq:calc1} and~\cref{eq:calc2}, we have
            \begin{align*}
                \frac{- r^{i} + r^{k-i}}{1-r} + \frac{-r^{k-i-1} - r^{k-i} + r^{i} + r^{i-1}}{2(1-r)} &= \frac{r^{k-i} - r^{i}}{1-r} + \frac{3(r^{i} + r^{i-1} - r^{k-i} - r^{k-i-1} )}{2(1-r)}\\
                &= \frac{-r^{k-i} - 3r^{k-i-1} +r^{i} +3r^{i-1}}{2(1-r)}\\
                &=\frac{(r+3)(r^{i-1} - r^{k-i-1})}{2(1-r)} = 0,
            \end{align*}
        since $r+3 = 0$. It follows that for each $1\leq i \leq k-1$,
            \begin{align*}
                (P_k\vec{a}^{(k)})_{i+1} + ({B}\vec{a}^{(k-1)})_{i+1} - \left((P_k\vec{a}^{(k)})_{i}  + ({B}\vec{a}^{(k-1)})_{i}\right) = 0,
            \end{align*}
        so that the first component of~\cref{eq:block-form-Dy} is constant. Next we consider ${B}^\top \vec{a}^{(k)} + {P}_{k-1}\vec{a}^{(k-1)} + 2{C}\vec{a}^{(k-1)}$. Again fixing $1\leq i \leq k-2$, it follows by~\cref{eq:onesteppath} that
            \begin{align*}
                ({P}_{k-1}\vec{a}^{(k-1)})_{i+1} - ({P}_{k-1}\vec{a}^{(k-1)})_{i} &= 2 T_{k-1, i} - T_{k-1, k-1}.
            \end{align*}
        On the other hand, by a similar argument as before, we have
            \begin{align*}
                ({B}^\top \vec{a}^{(k)})_{i+1} - ({B}^\top \vec{a}^{(k)})_{i} &= \sum_{j=1}^{k} \min\{ i + j , 2k - i -1 - j \} \vec{a}^{(k)}_{j}\\
                &\qquad - \sum_{j=1}^{k} \min\{ i -1 + j , 2k - i - j \} \vec{a}^{(k)}_{j}\\
                &= T_{k, k-i-1} - (T_{k, k} - T_{k, k-i}).
            \end{align*}
        Separately, we have that
            \begin{align*}
                ({C}\vec{a}^{(k-1)})_{i+1} - ({C}\vec{a}^{(k-1)})_{i} &= \sum_{j=1}^{k-1}({C}_{i + 1, j} - {C}_{i, j})\vec{a}^{(k-1)}_{j}\\
                &= \sum_{j=1}^{k-i-1} \vec{a}^{(k-1)}_{j} - \sum_{j=k-i}^{k-1} \vec{a}^{(k-1)}_{j}\\
                &= 2T_{k-1,k-i-1} -T_{k-1, k-1}.
            \end{align*}
        Again using~\cref{eq:tki-exp}, we have
            \begin{align}\label{eq:exp-part2}
                2 T_{k-1, i} - T_{k-1, k-1} &= \frac{- r^{i} + r^{k-1-i}}{1-r},\notag\\
                T_{k, k-i-1} - (T_{k, k} - T_{k, k-i}) &= \frac{-r^{k-i-1} - r^{k-i} + r^{i+1} + r^{i}}{ 2(1-r)},\text{ and }\notag\\
                2T_{k-1,k-i-1} -T_{k-1, k-1} &= \frac{r^{i} - r^{k-i-1}}{1-r}.
            \end{align}
        Putting the parts of~\cref{eq:exp-part2} together, we have
            \begin{align*}
                2 T_{k-1, i} &- T_{k-1, k-1} + T_{k, k-i-1} - (T_{k, k} - T_{k, k-i}) + 2(2T_{k-1,k-i-1} -T_{k-1, k-1}) \\
                &= \frac{- r^{i} + r^{k-1-i}}{1-r} + \frac{-r^{k-i-1} - r^{k-i} + r^{i+1} + r^{i}}{ 2(1-r)} + \frac{2(r^{i} - r^{k-i-1})}{1-r}\\
                &= \frac{r^{i} - r^{k-1-i} }{1-r} + \frac{-r^{k-i-1} - r^{k-i} + r^{i+1} + r^{i}}{ 2(1-r)} \\
                &= \frac{-r^{k-i-1} - 3r^{k-i} + r^{i+1} + 3r^{i}}{2(1-r)}\\
                &= \frac{(r+3)(r^{i} - r^{k-i-1})}{2(1-r)} = 0,
            \end{align*}
        again since $r+3=0$. Therefore the remaining three blocks of~\cref{eq:block-form-Dy} are constant. We have now shown that the first block and the following three agree, and we proceed to compute their common value. First set, for $s, N\ge 1$,
            \begin{align*}
                S_{s, N} &= \sum_{i=1}^{N} i \vec{a}^{(s)}_i.
            \end{align*}
        By definition of $\vec{a}^{(s)}$ and basic facts of geometric sums, we have
            \begin{align*}
                S_{s, N} &= \frac{1}{2} \sum_{i=1}^{N} i r^{i-1} + \frac{1}{2} \sum_{i=1}^{N} i r^{s - i}\\
                &=\frac{1}{2(1-r)^2}\Big(1-(N+1)r^{N}+N r^{N+1}+ r^{s+1}-(N+1)r^{\,s+1-N}+N r^{\,s-N}\Big)\\
                &=\frac{1-(4N+1)(-3)^N + (-3)^{\,s+1} + (4N+3)(-3)^{\,s-N}}{32}.
            \end{align*}
        Using~\cref{eq:tki-exp} and the preceding, we have, after some simplifications,
            \begin{align*}
                (P_{k}\vec{a}^{(k)})_{1} + 3({B}\vec{a}^{(k-1)})_{1} &= \sum_{j=1}^{k} (j-1) \vec{a}^{(k)}_{j} + 3\sum_{j=1}^{k-1} (j-1) \vec{a}^{(k-1)}_{j}\\
                &= S_{k, k} - T_{k, k} + 3(S_{k-1, k-1} - T_{k-1, k-1})\\
                &= \frac{(-3)^k + 4k -1}{8}.
            \end{align*}
        Lastly we verify
            \begin{align*}
                ({P}_{k-1}\vec{a}^{(k-1)})_{1} + {B}^\top \vec{a}^{(k)}_{1} + 2{C}\vec{a}^{(k-1)}_{1} &= \sum_{j=1}^{k-1} (j-1) \vec{a}^{(k-1)}_{j} + \sum_{j=1}^{k} (j-1) \vec{a}^{(k)}_{j} + 2\sum_{j=1}^{k-1} (j-1) \vec{a}^{(k-1)}_{j}\\
                &= 3(S_{k-1, k-1} - T_{k-1, k-1}) + (S_{k, k} - T_{k, k})\\
                &= \frac{(-3)^k + 4k -1}{8},
            \end{align*}
        which proves the second half of~\cref{eq:alpha-k}.
    \end{proof}

\end{document}